\newtheorem{lem}{Lemma}
\newtheorem{lemma}[lem]{Lemma}
\newtheorem{prop}{Proposition}
\newtheorem{proposition}[prop]{Proposition}
\newtheorem{thm}{Theorem}
\newtheorem{theorem}[thm]{Theorem}
\def\\{\cr}
\def\({\left(}
\def\){\right)}
\def\[{\left[}
\def\]{\right]}
\def\<{\langle}
\def\>{\rangle}
\begin{document}

\title{On the distance between factorials and repunits}

\author{Michael Filaseta}
\address{Mathematics Department, University of South Carolina, Columbia, SC 29208, USA}
\email{filaseta@math.sc.edu}

\author{Florian Luca}
\address{Mathematics Division, Stellenbosch University, Stellenbosch, South Africa}
\email{fluca@sun.ac.za}

\date{\today}

\pagenumbering{arabic}

\maketitle

\begin{abstract}
We show that if $n\ge n_0,~b\ge 2$ are integers, $p\ge 7$ is prime and  $n!-(b^p-1)/(b-1)\ge 0$, then $n!-(b^p-1)/(b-1) \ge 0.5\log\log n/\log\log\log n$.  Further results are obtained, in particular for the case $n!-(b^p-1)/(b-1) < 0$.
\end{abstract}

\bigskip

\centerline{\parbox[h]{10cm}{{\footnotesize \bf 2020 AMS Subject Classification:} {\footnotesize 11D61}.\\[5pt]
{\footnotesize \bf Keywords:} {\footnotesize 
Applications of sieve methods,
Chebotarev Density Theorem,
Diophantine equations.
}}}
\bigskip

\centerline{\it Dedicated to Carl Pomerance on the occasion of his $80^{\rm th}$ birthday.}

\bigskip

\section{Introduction}
There are many Diophantine equations involving factorials which were studied in the literature. A famous one formulated by H.~Brocard in 1976 (see \cite{Br1} and \cite{Br2}) and again by S.~Ramanujan \cite[Question 469, p.~327]{Ram} in 1913 asks for all the positive integer solutions  $(n,x)$ of $n!=x^2-1$. The only known solutions are $(n,x) \in \{(4,5), (5,11), (7,71)\}$. B.~Berndt and W.~Galway \cite{BG} proved in 2000 that there are no other solutions in the range $n\in [1,10^9]$. P.~Erd\H os and R.~Obl\'ath \cite{EO} treated the Diophantine equations $n!=x^k\pm y^k$ as well as $m!\pm n!=x^k$. A related problem concerning the positive integer solutions  $(n,x)$ of the Diophantine equation $n!=x(x+1)$ was posed by P.~Erd\H os at one of the Western Number Theory Conferences (see \cite{BH}). 

Here we look at Diophantine equations involving factorials and repunits. 
We use $b$ to represent a base, so for the rest of the paper, we will have $b \ge 2$ and only mention this condition on $b$ if we want to remind the reader of it.  
Recall that a repunit in base $b$ is an integer of the form $(b^m-1)/(b-1)$, where $m \in \mathbb Z^{+}$.  It is called a repunit in base $b$ since its base $b$ representation consists of a string of $1$'s. The exponent $m$ is the length of the repunit, or the number of its digits. Rather than asking for a factorial to be a repdigit in base $b$ and solving the corresponding Diophantine equation, we ask the somewhat more general question whether a factorial and a repunit can be close. In other words, we look at the difference
\begin{equation}
\label{eq:x0}
n!-\frac{b^m-1}{b-1}=a.
\end{equation}
The case $m = 1$ is not interesting.  
The case $m = 2$ is not much better since for every choice of $n$ and $a$ with $n! - a \ge 3$, the equation \eqref{eq:x0} with $m = 2$ has a solution in $b \ge 2$. 
We therefore restrict to $m \ge 3$.  
When $a=0$, we get that 
$$
n!=\frac{b^m-1}{b-1}.
$$
The main result in \cite{LLS} shows that $m\in \{3,4,6,12\}$. 
We will focus on the case that $m = p$, a prime, and the only prime in $\{3,4,6,12\}$ is $m=3$.  For this choice of $m$, the equation becomes 
$$
n!=b^2+b+1.
$$
Since the number on the right is never divisible by $9$, it follows that $n\le 5$, and one checks that there is no solution $(n,b)$ with $n\in \{1,2,3,4,5\}$ and $b \ge 2$.  Thus, writing
\begin{equation}
\label{eq:x}
n! = (b^p-1)/(b-1) + a,
\end{equation}
we conclude that $a \ne 0$.  Our main theorem is that $a \in \mathbb Z^{+}$ cannot be small.

\begin{theorem}
\label{thm:1}
There exists $n_0$ such that equation \eqref{eq:x} with integers $n\ge n_0,~b\ge 2$, prime $p\ge 7$ and $a \in \mathbb Z^{+}$ implies 
\begin{equation}
\label{thm1eq}
a \ge \frac{0.5\log\log n}{\log\log\log n}.
\end{equation}
\end{theorem}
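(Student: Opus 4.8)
The starting point is to suppose for contradiction that $a$ is small, say $a < 0.5\log\log n/\log\log\log n$, and to extract $p$-adic information from \eqref{eq:x}. First I would rewrite the equation as $(b-1)\,n! = b^p - 1 + (b-1)a$, so that modulo any prime $q$ dividing $n!$ with $q \nmid b(b-1)$ we get $b^p \equiv 1 - (b-1)a \pmod{q}$. The idea is that for \emph{many} primes $q \le n$ the right-hand side $1-(b-1)a$ is forced to lie in a very thin set, and combining the congruences for many such $q$ should pin down $b$ (hence $a$) too tightly to be consistent. More precisely, if $a$ is bounded, then $1-(b-1)a$ ranges over boundedly many residue classes as far as the ``shape'' is concerned, and the condition $b^p \equiv 1-(b-1)a \pmod q$ says that $1-(b-1)a$ is a $p$-th power residue mod $q$ whenever $b$ itself is. Since $p \ge 7$ is prime, the set of $p$-th power residues mod $q$ has density $1/p$ (for $q \equiv 1 \pmod p$) or is everything (otherwise), and one expects a random target to fail the $p$-th power condition for a positive proportion of primes $q \equiv 1\pmod p$ up to $n$.

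The main engine, as advertised in the abstract (``Applications of sieve methods, Chebotarev Density Theorem''), is a Chebotarev/sieve argument: fixing the value $a$ and essentially the value of $(b-1)a$, consider the splitting field $K_q$ relevant to the statement ``$1-(b-1)a$ and $b$ are simultaneously $p$-th powers mod $q$''; this is governed by Kummer extensions $\Q(\zeta_p, x^{1/p})$ for the relevant $x$. Chebotarev gives that the density of primes $q$ for which the required simultaneous $p$-th-power conditions hold is at most something like $1/p^2$ or $1/p$, and in particular is bounded away from $1$. A sieve (Brun or Selberg) on the primes $q \le n$, or just a counting argument over $q \equiv 1 \pmod p$, then shows that it is impossible for \emph{all} primes $q \le n$ dividing $n!$ (which is all of them up to $b(b-1)$) to satisfy the congruence forced by \eqref{eq:x}, once $n$ is large relative to the finitely many choices of $a$ and the relevant data. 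Turning this around, the number of admissible $a$ must grow, and quantitatively the threshold is $a \gg \log\log n/\log\log\log n$: the double-log comes from the fact that we are working with primes $q \le n$ (so $\prod_{q\le n} q = e^{(1+o(1))n}$, and the relevant modulus for the Kummer field and the product of small primes contributes a $\log$), and the triple-log from balancing the sieve dimension against $p$ and the number of excluded classes, much as in classical ``smallest prime in a progression'' style bounds.

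Concretely the steps I would carry out are: (1) reduce to $b$ lying in a bounded range or handle large $b$ separately (if $b$ is enormous then $b^p/(b-1) > n!$ already unless $a$ is hugely negative, so the positive-$a$ case constrains $b \le (n!)^{1/(p-1)}$ roughly, which combined with $p\ge 7$ is plenty of room but also means $b$ can be genuinely large — so one keeps $b$ as a parameter); (2) for each prime $q \le n$ with $q \nmid b(b-1)$, record the congruence $b^p \equiv 1-(b-1)a \pmod q$; (3) set up the family of number fields $\Q(\zeta_p, b^{1/p}, (1-(b-1)a)^{1/p})$ and apply effective Chebotarev to count primes $q$ with prescribed Frobenius, obtaining that a positive density of primes $q \equiv 1 \pmod p$ up to $n$ \emph{violate} the congruence unless $1-(b-1)a$ is essentially a $p$-th power times a unit, i.e. unless $(b-1)a$ is forced; (4) a sieve estimate to control error terms and to make the ``positive density'' into ``at least one such $q$'' for $n \ge n_0$; (5) bookkeeping to extract the explicit constant $0.5$ and the $\log\log n/\log\log\log n$ shape, by optimizing how large the bounded set of candidate $a$-values can be before the Chebotarev count forces a contradiction.

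The hard part will be step (3)–(4): making the Chebotarev input effective and \emph{uniform} in the parameters $b$, $a$, and $p$, since $b$ and $p$ both vary and the conductor/discriminant of the Kummer fields grows with them; one must be careful that the error term in effective Chebotarev (which carries a $\sqrt{n}$-type loss and depends on $\log |\mathrm{disc}|$) does not swamp the main term for the ranges of $p$ and $b$ in play — this is presumably where GRH or a Siegel–Walfisz-type substitute, plus a sieve to convert density into an actual prime, does the real work, and where the somewhat lossy $0.5\log\log n/\log\log\log n$ (rather than a power of $\log n$) originates.
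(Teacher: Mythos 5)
There is a genuine gap, and it sits exactly where you locate ``the hard part'': your step (3) builds Kummer fields $\Q(\zeta_p, b^{1/p}, (1-(b-1)a)^{1/p})$ whose conductor/discriminant depends on $b$, and this dependence is not a technical nuisance but a fatal obstruction. By Lemma~\ref{lem:1} (or a direct size comparison in \eqref{eq:x}) one has $\log b \gg \sqrt{n}/\log n$, so the log-discriminant of your fields is already of size at least $\sqrt{n}$; unconditional effective Chebotarev bounds for the least prime with prescribed Frobenius (Lagarias--Montgomery--Odlyzko \cite{LMO}) are exponential in the log-discriminant, so they cannot produce a ``violating'' prime $q \le n$, and no sieve repairs this. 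Worse, the congruence $b^p \equiv 1-(b-1)a \pmod q$ holds for \emph{every} prime $q \le n$ automatically, because it is a restatement of \eqref{eq:x}; so nothing about the specific pair $(b,\,1-(b-1)a)$ can be contradicted by primes up to $n$. The contradiction must come from an object whose arithmetic is independent of $b$, and your plan never produces one. You also never use the hypothesis $a>0$ together with $a\mid n!$, and you have no mechanism guaranteeing that a positive proportion of primes fail the relevant splitting condition (which can fail outright if the relevant polynomial has a linear factor).

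The paper's route supplies precisely these missing ingredients. The equation is read as: the \emph{fixed} polynomial $P_{p,a}(X)=\Phi_p(X)+a$, whose data involve only $a$ and $p$, has the root $X=b$ modulo every prime $q\le n$. Before invoking Chebotarev one needs two reductions (Lemma~\ref{thm1:lemma}): since $1\le a\le n$, $a\mid n!$ and hence $a\mid \Phi_p(b)$, whose prime factors are $p$ (at most once) or $\equiv 1\pmod p$; thus either $a\equiv 1\pmod p$, which is eliminated by reducing \eqref{eq:y} modulo $p$ (note $p\le n$ by Lemma~\ref{lem:1}), or $p\mid a$, in which case $P_{p,a}(X+1)$ is Eisenstein at $p$ and $P_{p,a}$ is irreducible. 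Irreducibility is what makes the set of primes $q$ with no root of $P_{p,a}$ have density $\ge 1/(p-1)>0$ (Jordan/Chebotarev), and then \cite{LMO} bounds the least such prime by $2D^{A}$ with $D$ controlled solely in terms of $a$ (using $p\le a$), roughly $2(a+1)^{A(a+1)^{a+1}}$. If this bound is $\le n$, that prime divides $n!$ but not $P_{p,a}(b)$, a contradiction; hence $\log\log n \ll (a+1)\log(a+1)$, which is exactly the $0.5\log\log n/\log\log\log n$ threshold. Note also that your heuristic for the double logarithm (from $\prod_{q\le n}q=e^{(1+o(1))n}$) is not the real source: it comes from the doubly exponential dependence on $a$ in the least-nonsplit-prime bound.
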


Observe that for each positive integer $n$, there are at most finitely many solutions to \eqref{eq:x} in integers $b\ge 2$, primes $p\ge 7$ and $a \in \mathbb Z^{+}$.   Hence, an alternative statement of the result is that equation \eqref{eq:x} has finitely many solutions in integers $n\ge 1,~b\ge 2$, primes $p\ge 7$ and positive integers $a < 0.5\log\log n/\log\log\log n$.

It is of some interest to address what happens when $a$ is negative in \eqref{eq:x}.  The following result addresses this case, where the notation $\Phi_{p}(X)$ refers to the $p^{\rm th}$ cyclotomic polynomial.

\begin{theorem}
\label{thm:2}
Let $b\ge 2$, $n \ge 2$ and $a$ be integers, and let $p \ge 7$ a prime.  Then
\begin{enumerate}[leftmargin=1cm,itemsep=5pt]
\item[(i)]
if there are solutions in $n$ to \eqref{eq:x} with $|a| \le n$, then $p \mid a$ or $|a| \equiv 1 \pmod{p}$;
\item[(ii)]
if $\Phi_{p}(X) + a$ is irreducible or a linear factor times an irreducible polynomial and if $n$ is sufficiently large  and satisfying \eqref{eq:x}, then 
\begin{equation}
\label{thm2eq}
|a| \ge \frac{0.5\log\log\log n}{\log\log\log\log n};
\end{equation}
\item[(iii)]
if $n$ is sufficiently large, $a < 0$, $p \mid a$ and \eqref{eq:x} holds, then \eqref{thm2eq} holds;
\item[(iv)]
if $2 \le |a| \le n$ and \eqref{eq:x} holds, then
$a$ is odd and every prime factor $q \ne p$ of $a$ satisfies $q \equiv 1 \pmod{p}$;
\item[(v)]
if $2 \le |a| \le \min\{ 100000, n \}$ and
\begin{align*}
(a,p) \not\in S_{0}:= &\bigg(  \bigcup_{7 \le p \le 100000} \{ (-p,p) \}  \bigg) \bigcup \{ 
(-43,7), (-127,7), (-547,7), (-683, 11), \\
&\qquad\quad (-1093, 7), (-2047,11), (-2731,13), (-3277,7), (-5461,7), \\
&\qquad\quad (-8191, 11), (-13021, 7), (-19531, 7), (-39991,7),  \\
&\qquad\quad (-43691, 17), (-44287, 11), (-55987, 7), (-88573, 11)
 \},
\end{align*}
then \eqref{eq:x} does not hold;
\item[(vi)]
if $n$ is sufficiently large, $(a,p) \in S_{0}$ and \eqref{eq:x} holds, then \eqref{thm2eq} holds.
\end{enumerate}
\end{theorem}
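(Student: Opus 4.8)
My plan is to first rewrite \eqref{eq:x} using that, since $p$ is prime, $(b^p-1)/(b-1)=\Phi_p(b)$, and to record two standard facts about $\Phi_p$: $\Phi_p(b)$ is always odd, and every prime divisor $q$ of $\Phi_p(b)$ is either $q=p$ (in which case $q^2\nmid\Phi_p(b)$, by lifting the exponent) or $q\equiv 1\pmod p$. Reducing \eqref{eq:x} modulo a prime $q\le n$, so that $q\mid n!$, gives $a\equiv-\Phi_p(b)\pmod q$; and if in addition $q\ne p$ and $q\not\equiv 1\pmod p$, then (using $\Phi_p(b)(b-1)=b^p-1$ and the order of $b$ mod $q$) one checks $q\nmid\Phi_p(b)$, hence $q\nmid a$. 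Consequently, when $|a|\le n$ every prime factor of $a$ is $\le n$ and so must be $p$ or $\equiv 1\pmod p$; in particular $a$ is odd, and writing $a=\pm p^{\delta}m$ with $\delta\ge 0$ and $m$ a product of primes $\equiv 1\pmod p$ we get $p\mid a$ or $|a|=m\equiv 1\pmod p$ (the case $|a|=1$ being immediate). This yields (i) and (iv), and the same congruences restrict which pairs $(a,p)$ must be examined in (v).

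For the quantitative parts (ii), (iii), (vi) I would set $g(X):=\Phi_p(X)+a$ and use that $g(b)=n!$ forces $g$ to have a root modulo every prime $q\le n$, indeed modulo $q^{v_q(n!)}$. When $g$ is irreducible, of degree $p-1\ge 6$, a classical fact going back to Jordan---at least a proportion $1/(p-1)$ of the elements of a transitive subgroup of $S_{p-1}$ fix no point---shows that a positive proportion of primes $q$ are such that $g$ has no root mod $q$. An effective form of the Chebotarev density theorem then bounds the least such prime $q_0$ in terms of the splitting field of $g$, whose degree divides $(p-1)!$ and whose discriminant I would bound via $\mathrm{disc}(g)=\pm\mathrm{Res}(g,g')$ (the roots of $\Phi_p'$ have absolute value $O(1)$, giving $|\mathrm{disc}(g)|\le(p-1)^{p-1}(2^{p}+|a|)^{p-2}$). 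Since every prime $q\le n$ is a root-prime of $g$, one must have $q_0>n$, which together with $p-1\le\log_2(n!+|a|)$ (from $2\le b$ and $b^{p-1}\le\Phi_p(b)$) bounds $n$ and forces either $|a|$ to be far larger than the claimed bound, or $p$ to be large. The case $g=(X-c)h$ with $h$ irreducible needs separate handling, since the rational root $c$ survives modulo every prime and kills the root-free-prime argument; but there $g(c)=0$ gives $a=-\Phi_p(c)$, so $|a|=\Phi_p(c)\ge 2^{p-2}$ unless $c\in\{0,\pm1\}$, while $c=1$ gives $a=-p$ (the setting of (iii)) and $c\in\{0,-1\}$ gives $a=-1$, for which $g=X\cdot\tfrac{X^{p-1}-1}{X-1}$ is a product of more than two irreducible factors ($p-1$ being composite for $p\ge 7$) and hence lies outside the hypothesis of (ii).

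It remains to treat the regime where $p$ is large, which I would handle through $p$-adic and $2$-adic valuations. If $p\le n$ then $v_p(n!)\ge 1$ while $v_p(\Phi_p(b))\le 1$, which forces $a\equiv-1\pmod p$ and, once $n\ge 2p$, successive congruences pinning $b$ modulo growing powers of $p$---eventually incompatible with $b\le(n!+|a|)^{1/(p-1)}$; and simultaneously $v_2(n!)=n-s_2(n)$ is enormous, so $\Phi_p(b)\equiv-a\pmod{2^{v_2(n!)}}$, and the rigidity of $\Phi_p$ modulo powers of $2$ (its image in $\mathbb{F}_2$ is $\{1\}$) limits the solvability of this congruence. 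Parts (iii) and (vi) are then the instances of this combined machinery with $p\mid a$, $a<0$, respectively $(a,p)\in S_0$; the set $S_0$ records exactly the pairs (the $(-p,p)$, and the $a=-\Phi_p(c)$ with small $|c|$ and $p$, such as $\Phi_7(c)$ for $|c|\le 6$) for which $g$ factors badly enough that no small local obstruction exists and only the asymptotic argument applies. Part (v) is then a finite check: by (i)/(iv) only finitely many $p$ are admissible for each $a$ with $2\le|a|\le 100000$, and for each surviving $(a,p)\notin S_0$ one exhibits a prime power $q^{k}$ modulo which $\Phi_p(X)+a$ has no root, so that \eqref{eq:x} fails for all $n\ge kq$, the finitely many smaller $n$ being checked directly.

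The step I expect to be the main obstacle is making the Chebotarev input genuinely effective and uniform: one needs a bound for the least root-free prime of $\Phi_p(X)+a$ that is explicit in $p$ and $|a|$ and still beats $n!$ once $p$ is of moderate size, and one must then dispose of the large-$p$ range cleanly by the valuation arguments and verify that the two regimes overlap---so that no admissible $(a,p)$ escapes both---while carrying the genuinely exceptional pairs $S_0$ through (iii), (v) and (vi).
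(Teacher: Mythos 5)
Your treatment of (i) and (iv) (prime divisors of $\Phi_p(b)$ are $p$, to the first power, or $\equiv 1 \pmod p$, and $\Phi_p(b)$ is odd), your computational plan for (v), and your use of an effective Chebotarev/least-non-root-prime bound for the \emph{irreducible} case of (ii) all match the paper in substance (one simplification you miss: after reducing to $|a|\le n$, part (i) gives $p\le |a|$, which eliminates your separate ``large $p$'' regime entirely). The genuine gap is the linear-times-irreducible case of (ii), and with it (iii) and (vi), which the paper reduces to exactly that case. Your argument there is: if $g=\Phi_p+a=(X-c)h$ then $a=-\Phi_p(c)$, so $|a|\ge 2^{p-2}$ unless $c\in\{0,\pm1\}$. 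But $2^{p-2}$ does not grow with $n$, whereas \eqref{thm2eq} does; for a fixed pair such as $p=7$, $a=-\Phi_7(-2)=-43$ (which is precisely one of the members of $S_0$), or $a=-p$ (the case $c=1$, covering all the pairs $(-p,p)\in S_0$), your bound says nothing as $n\to\infty$, so (ii) is not proved on its stated hypothesis, and (vi) is untouched. Your fallback for these cases is the $p$-adic/$2$-adic ``rigidity'' sketch, and that cannot work: when $g$ has the linear factor $X-c$, $g$ has a root (namely $c$) modulo \emph{every} prime power, so there is no local obstruction to exploit -- the congruence $\Phi_p(b)\equiv -a \pmod{2^{\nu_2(n!)}}$ is genuinely solvable and yields no contradiction. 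The obstruction in this case is quantitative, not local.

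The paper's missing idea is a size-versus-density comparison for the irreducible cofactor: writing $P_{p,a}(X)=(X-X_0)Q(X)$ with $Q$ irreducible of degree $p-2$, one has $Q(b)\ge (1-\varepsilon)(n!)^{(p-2)/(p-1)}$, while $Q(b)$ divides the part of $n!$ supported on primes $q$ for which $Q$ has a root mod $q$ -- a set of density at most $(p-3)/(p-2)$ by Jordan's theorem. A uniform effective Chebotarev theorem (Thorner--Zaman), applicable once $\log n/\log\log n$ exceeds a tower in $|a|$ (this is exactly where the threshold \eqref{thm2eq} comes from), bounds the logarithm of that divisor by roughly $\frac{p-3}{p-2}\,n\log n$, contradicting $\frac{p-2}{p-1}\,n\log n$ since $\frac{p-3}{p-2}<\frac{p-2}{p-1}$. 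Nothing of this sort appears in your outline, and without it (ii) in the reducible case, (iii) and (vi) are unproved. Separately, for (iii) you also need the factorization hypothesis itself: the paper proves that for $a<0$, $p\mid a$, the polynomial $\Phi_p(X)+a$ is irreducible or linear times irreducible via Eisenstein (when $p\nmid(a_1+1)$, $a=pa_1$) and otherwise via the Newton polygon together with Dumas's theorem; this step is absent from your proposal, which treats (iii) as an instance of the (unsound) valuation machinery rather than as a reduction to (ii).
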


Observe that (v) is indicating that in Theorem~\ref{thm:1}, except for the choices of $a$ arising in $S_{0}$, one can replace \eqref{thm1eq} with $a > n$ whenever $2 \le a \le 10^{5}$.  This is of some significance given that the right-hand side of \eqref{thm1eq} is less than $10^{5}$ for $n \le 10^{10^{1000000}}$.  So the value of Theorem~\ref{thm:1} is not in what happens for a particular value of $n$ in \eqref{eq:x} but rather in that it provides a bound on $a$ that universally holds for all $n$.  It is also perhaps noteworthy that the only instance of $(a,p)$ in $S_{0}$ where $p \mid a$ besides the pairs $(-p,p)$ is $(-39991,7)$.

Now we make some comments before passing to the proofs. The Diophantine equation \eqref{eq:x0} can be rewritten as 
\begin{equation}
\label{eq:1}
n!=\frac{b^m-1}{b-1}+a.
\end{equation}
The right--hand side is a polynomial $P_{m,a}(X)=(X^m-1)/(X-1)+a$ in $X=b$  for fixed $a$ and $m$. The existence of a solution $(n,b)$ to \eqref{eq:1} with $b\ge 2$ and $n$ large means that 
the congruence $P_{m,a}(X)\equiv 0\pmod q$ has a simultaneous solution for all primes $q \le n$, and in fact even for all composite moduli $q \le n$. Conditionally upon the $abc$ conjecture, the second author \cite{Lu} showed that for every fixed polynomial $Q(X)\in {\mathbb Z}[X]$ of degree at least $2$, the Diophantine equation $n!=Q(b)$ has only finitely many integer solutions $(n,b)$.  His results can be easily adapted to show that the $abc$ conjecture implies
the Diophantine equation \eqref{eq:1} has only finitely many solutions $(n,m,b,a)$ with $|a|=O(n)$, $b\ge 2$ and $m\ge 3$. Unconditionally, D.~Berend and J.~E.~Harmse \cite{BH} showed the equation $n!=Q(b)$ 
has only finitely many integer solutions $(n,b)$ under certain assumptions  on $Q(X)$. These assumptions do not hold for the polynomial $P_{5,-1}(X)=X(X+1)(X^2+1)$, so one cannot deduce in particular that equation 
\eqref{eq:1} has only finitely many solutions $(n,b)$ when $a=-1$ and $m=5$, for example. It would certainly be interesting to prove some result in the spirit of Theorem~\ref{thm:1} by removing the restriction that the parameter $p$ is prime. We leave this question as a project for the interested reader.

\section{Lemmas and proofs}

To begin with, we are interested in the solutions to equation \eqref{eq:1}
in integers $n \ge 2$,~$a \ne 0$,~$b\ge 2$ and $m\ge 3$.  

 \begin{lemma}
 \label{lem:1}
 Assume that $|a|=O(n)$ in equation \eqref{eq:1}.  Then 
 $$
 m \ll n^{1/2}( \log n)^2
 \qquad \text{and} \qquad
b \ge \exp\big( c_{1} \sqrt{n}/\log n \big),
 $$
 for some absolute constant $c_{1} > 0$.
 \end{lemma}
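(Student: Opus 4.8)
The plan is to extract both bounds from the single observation that a solution to \eqref{eq:1} with $n$ large forces $P_{m,a}(X) \equiv 0 \pmod{q}$ to be solvable for every prime $q \le n$. First I would handle the lower bound on $b$. Since $n! \mid n!$ trivially, $n!$ divides the left-hand side, so $n!$ must divide $(b^m-1)/(b-1) + a$; in any case $n! = (b^m-1)/(b-1) + a \le b^{m-1} + a \le b^{m-1} + O(n)$, hence $b^{m-1} \gg n!$, i.e. $(m-1)\log b \gg \log n! \gg n \log n$ by Stirling. Combined with the upper bound $m \ll n^{1/2}(\log n)^2$ this gives $\log b \gg n\log n / (n^{1/2}(\log n)^2) = \sqrt n/\log n$, which is exactly the claimed $b \ge \exp(c_1\sqrt n/\log n)$. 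So the lower bound on $b$ is an easy consequence once the upper bound on $m$ is in hand.

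The real content is the upper bound $m \ll n^{1/2}(\log n)^2$. Here I would argue by counting primes $q \le n$ for which $P_{m,a}(X)$ has a root modulo $q$. Write $P_{m,a}(X) = (X^m - 1)/(X-1) + a$, which up to the factor $X-1$ is $X^m - 1 + a(X-1)$; for a prime $q$ not dividing the (bounded, $O(n^{O(1)})$) discriminant-type quantities and with $q \nmid a$, having a root mod $q$ is a Chebotarev/Frobenius condition on $q$ relative to the splitting field of $X^m - 1 + a(X-1)$ (or one reasons directly with the order of $b$ modulo $q$: if $b$ is a root then $b^m \equiv 1 - a(b-1) \pmod q$, and coupling this with a second prime power of similar size pins down $b$). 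The cleanest route is: a solution $b$ modulo $q$ must have multiplicative order dividing something controlled by $m$, so the number of admissible residues $b \bmod q$ is $O(m)$ for most $q$; since $b$ itself is a single fixed integer, for the solution to persist across all primes $q \le n$ we need $b \bmod q$ to lie in this $O(m)$-element set for every such $q$. Turning this around via a sieve (large sieve or Brun–Titchmarsh-type counting) on the residue of $b$ modulo the primes up to $n$: if $m$ were too small, the density of feasible $b$ modulo the product of primes up to $n$ would be too thin to contain any integer below $b \le (n!)^{O(1)} = \exp(O(n\log n))$, forcing $m \gg n^{1/2}(\log n)^{-2}$... wait, we want the reverse inequality, so the correct packaging is that the sieve shows the set of integers that are roots of $P_{m,a}$ modulo all primes $q\le n$ has counting function (up to $x$) at most $x \prod_{q \le n}(O(m)/q)$ plus error, and for this to be nonempty below $\exp(O(n\log n))$ one needs $m$ to be at least a certain size — but since the hypothesis is that a solution exists, we instead deduce an \emph{upper} bound on $m$ from the fact that the relevant field extensions cannot be too large. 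I would therefore phrase it through the discriminant: $P_{m,a}(b) \equiv 0 \pmod{q}$ for all $q \le n$ implies $\prod_{q \le n} q \mid \text{Res}(P_{m,a}, P_{m,a}')$-type product, or more simply $n! \mid$ (product of values), and degree/height bounds on $P_{m,a}$ (degree $m-1$, height $\le |a|+1 = O(n)$) force $m$ small.

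Concretely, the key step I expect to use is: $n! = (b^m-1)/(b-1) + a$ means $(b-1) \cdot n! = b^m - 1 + a(b-1)$, so $b^m \equiv 1 - a(b-1) \pmod{(b-1)n!}$ and in particular modulo $q$ for each prime $q \le n$ with $q \nmid b-1$. For such a prime $q$, the order of $b$ modulo $q$ is a divisor $d$ of $q-1$, and $b^m$ takes one of $d$ values as $b$ ranges, but more to the point $b$ is fixed so this is one congruence condition; to get a bound on $m$ one counts that the number of $m' \le m$ with $b^{m'} \equiv$ the required value mod $q$ is either $0$ or $\approx m/\text{ord}_q(b)$, and summing a suitable weighted count over $q \le n$ against the trivial bound coming from $b^m - 1 + a(b-1)$ being divisible by $n!$ (hence having $\gg n$ distinct prime factors) yields $m \sqrt n \gg n/\log n$ after using $\text{ord}_q(b) \gg \log q$-type estimates on average; rearranged this is $m \gg \sqrt n/\log n$ — again the wrong direction, so the honest statement must be the one where the upper bound on $m$ comes from $b^m = 1 - a(b-1) + (b-1)n! \le (b-1)n! + O(nb)$ giving $m \log b \le \log n! + \log(b-1) + O(1)$, i.e. $m \le (\log n! + \log b)/\log b = 1 + \log n!/\log b$, which only bounds $m$ in terms of $b$ and does not alone give $n^{1/2}(\log n)^2$. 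The genuine obstacle, and where I would concentrate effort, is producing the \emph{absolute} upper bound $m \ll n^{1/2}(\log n)^2$ independent of $b$: this must come from a lower bound $\log b \gg \log n! / (n^{1/2}(\log n)^2) $ derived \emph{first} and independently, presumably via a sieve/Chebotarev argument showing the integer $b$ is constrained to a sparse residue class modulo $\prod_{q\le\sqrt n} q$ (density $\prod (m/q)$-ish), so $b \gg \exp(\sqrt n)$-scale while simultaneously $b \le (n!)^{1/(m-1)}$, and combining the two inequalities on $b$ eliminates $b$ and isolates $m$. So the main difficulty is the sieve step bounding the count of feasible $b$; everything else is Stirling and elementary manipulation, and I would present the sieve bound (large sieve with the $O(m)$ forbidden residues per prime) as the crux, then derive both displayed conclusions as corollaries.
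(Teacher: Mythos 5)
Your reduction of the lower bound on $b$ to the upper bound on $m$ is fine: from \eqref{eq:1} and Stirling one gets $(m-1)\log b \gg n\log n$, so once $m \ll n^{1/2}(\log n)^2$ is known, $\log b \gg \sqrt{n}/\log n$ follows. The genuine gap is that you never establish the upper bound on $m$, and the route you single out as the crux cannot deliver it. The sieve you propose (large sieve with the residue of $b$ confined to at most $\deg P_{m,a}=m-1$ classes modulo each prime $q \le n$) can only show that $b$ is polynomially large in $n$: since by hypothesis there is at least one admissible class modulo each prime $q$, the large-sieve denominator $\sum_{d\le Q}\mu^2(d)\prod_{q\mid d}\omega(q)/\big(q-\omega(q)\big)$ is at most of order $Q^2$, so the statement ``the set of feasible $b$ is nonempty below $B$'' forces only $B \gg n^{2}$ or so, nowhere near the $\exp\big(c\sqrt{n}\big)$ scale you would need before eliminating $b$ and isolating $m$. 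Your other sketches (orders of $b$ modulo $q$, Chebotarev for the splitting field, resultant divisibility) all terminate, as you note yourself, in inequalities pointing in the wrong direction, so no complete argument for $m \ll n^{1/2}(\log n)^2$ is actually on the table.

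The paper's mechanism is the ingredient you are missing: an ultrametric (here $2$-adic) linear form in logarithms. One rewrites \eqref{eq:1} as $(b-1)\,n! = b^m + \big(a(b-1)-1\big)$, disposes of the degenerate cases $a(b-1)-1 \in \{0,\pm 1\}$ by hand, and compares $2$-adic valuations: the left side has $\nu_2\big((b-1)n!\big) \ge \nu_2(n!) > n/2$, while Bugeaud's bound for linear forms in $2$-adic logarithms gives $\nu_2\big(b^m + (a(b-1)-1)\big) \ll \log b\,(\log b + \log n)\,\log m$. This yields $\log b \gg \big(n/((\log n)(\log m))\big)^{1/2}$ \emph{independently of any prior bound on $m$}, and feeding that into $m\log b \asymp n\log n$ gives $m \ll n^{1/2}(\log n)^2$, after which $\log m \ll \log n$ recovers the stated lower bound on $b$. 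Without some such transcendence input (or another tool producing an exponential lower bound on $b$ first), the elementary counting and sieve considerations in your proposal do not suffice.
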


The implied constants here and elsewhere, unless stated otherwise, are absolute.  In this lemma, we may take $n$ to be large since for bounded $n$ we have, by \eqref{eq:1}, that $m$ is also bounded and the conclusion $m\ll n^{1/2}( \log n)^2$ follows.  
The conclusion also remains valid in a wider range for the parameter $a$ (for example, if $a=n^{O(1)}$), but we will only be interested in $a$ in a much smaller range.

\begin{proof}
Let $n$ be large.  
The left--hand side in \eqref{eq:1} is $n! \asymp (n/e)^{n+O(1)}$ while the right--hand side is 
$\asymp \max \{b^{m-1}, |a|\}$. Taking logarithms on both sides of \eqref{eq:1} and recalling $|a|=O(n)$, we get that 
$$
m\log b\asymp n\log n.
$$
We next rewrite \eqref{eq:1} as 
\begin{equation}
\label{eq:lem1eq1}
(b-1)n!=b^{m} + (a(b-1)-1).
\end{equation}
If $a(b-1)-1=0$, then $a=1,~b=2$, and we get the equation $n!=2^m$ which has no solutions with $m\ge 2$. 
If $a(b-1)-1=\pm 1$, then $a \ne 0$ and $b \ge 2$ imply $b\in \{2,3\}$ and either 
$n! = 2^m + 1$ or $2 \cdot n! = 3^{m} + 1$. 
As $2^m + 1$ is odd, the equation $n! = 2^m + 1$ does not hold since $m \ge 3$.
Since $3^m + 1$ is not divisible by $3$, the equation $2 \cdot n! = 3^m + 1$ does not hold for $n \ge 3$.
As we are taking $n$ large, we deduce $|a(b-1) - 1|\ge 2$.  

Next, we compare the $2$-adic valuations of both sides of \eqref{eq:lem1eq1}. 
In the left--hand side, we have
\begin{equation}
\label{eq:lem1eq2}
\nu_2\big((b-1)n!\big) \ge \nu_2(n!)=\bigg\lfloor \frac{n}{2} \bigg\rfloor + \bigg\lfloor\frac{n}{4} \bigg\rfloor +\cdots > \frac{n}{2}
\end{equation}
for $n\ge 4$. 
In the right--hand side, we have, using a linear form in $2$-adic logarithms (cf.~\cite[Theorem~2.9]{YBug}), 
\begin{align*}
\nu_2(b^m + ((a(b-1)-1))) &\ll \log b \log |a(b-1)-1| \log m \\
&\ll \log b (\log b+\log n) \log m.
\end{align*}
We thus get that
$$
n\ll \log b (\log b+\log n)\log m\ll (\log b)^2 \log n \log m.
$$
Hence, 
\begin{equation}
\label{eq:lem1eq3}
\log b\gg \left(\frac{n}{(\log n)(\log m)}\right)^{1/2}.
\end{equation}
We thus get that
$$
n\log n\asymp m\log b\gg m \left(\frac{n}{(\log n)(\log m)}\right)^{1/2},
$$
which gives us that 
$$
n^{1/2} (\log n)^{3/2} \gg \frac{m}{(\log m)^{1/2}}.
$$
This implies $m\ll n^{1/2} (\log n)^2$. 
As a consequence, we have $\log m \ll \log n$, and the lemma now follows from \eqref{eq:lem1eq3}.
\end{proof}

\medskip

Although our main result addresses the case in equation \eqref{eq:1} where $a = -1$ and $m$ is prime, we provide arguments next for the case $a = -1$ and all sufficiently large integers $m$. 

\begin{lemma}
\label{lemmatwo}
There is an $m_{0}$ such that \eqref{eq:1} has only finitely many solutions $(n,a,b,m)$ in integers $n \ge 2$, $a=-1$, $b \ge 2$ and $m \ge m_{0}$. 
\end{lemma}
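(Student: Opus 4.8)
The plan is to exploit the special factorization of $(b^m-1)/(b-1)-1 = (b^m-b)/(b-1) = b\cdot(b^{m-1}-1)/(b-1)$, so that equation \eqref{eq:1} with $a=-1$ reads
\begin{equation*}
n! = b\cdot\frac{b^{m-1}-1}{b-1}.
\end{equation*}
First I would invoke Lemma~\ref{lem:1}: since $a=-1$ certainly satisfies $|a|=O(n)$, we get $m\ll n^{1/2}(\log n)^2$ and, crucially, $b\ge\exp(c_1\sqrt n/\log n)$, so $b$ is very large and in particular $b>n$ once $n$ is large. Consequently every prime $q\le n$ divides $n!$, hence divides $b(b^{m-1}-1)/(b-1)$; but $q\le n<b$ forces $q\nmid b$, so $q$ divides $(b^{m-1}-1)/(b-1)$, i.e. $q\mid b^{m-1}-1$. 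Therefore the multiplicative order of $b$ modulo $q$ divides $m-1$ for \emph{every} prime $q\le n$. Taking $q$ to be any prime in $(n/2,n]$ (which exists by Bertrand) and using that $\mathrm{ord}_q(b)\ge 2$ unless $q\mid b-1$, one sees $m-1$ must be divisible by a large quantity; more usefully, the lcm of $\{\mathrm{ord}_q(b): q\le n,\ q\nmid b(b-1)\}$ divides $m-1$.

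The key step is a counting/lower-bound argument showing this lcm is too big. Pick a prime $q\le n$; then $b^{m-1}\equiv 1\pmod q$ and $(b^{m-1}-1)/(b-1)\equiv 0 \pmod q$ means in fact $1+b+\cdots+b^{m-2}\equiv 0\pmod q$. Summing the $2$-adic (or $q$-adic) valuation estimate as in the proof of Lemma~\ref{lem:1}: compare $\nu_q(n!)$ with $\nu_q\big(b(b^{m-1}-1)/(b-1)\big)=\nu_q(b^{m-1}-1)$ for $q\nmid b$. By the theory of linear forms in $q$-adic logarithms (as already used in the excerpt), $\nu_q(b^{m-1}-1)\ll \log b\,\log m$, uniformly in $q$ in a suitable sense, whereas $\nu_q(n!)$ can be as large as $\log n/\log q$. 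Rather than a single prime, I would instead count: the number of primes $q\le n$ with $\mathrm{ord}_q(b)=d$ is at most the number of prime divisors of $b^d-1$, which is $\ll d\log b$. Hence
\begin{equation*}
\pi(n)-\omega(b(b-1)) \le \sum_{d\mid m-1}\#\{q\le n:\ \mathrm{ord}_q(b)=d\}\ll \sum_{d\mid m-1} d\log b \le (m-1)\,d(m-1)\,\log b,
\end{equation*}
where $d(m-1)$ is the divisor function. Using $m\ll n^{1/2}(\log n)^2$ and $\log b\ll \sqrt n\log n$ (the upper bound from $m\log b\asymp n\log n$), the right side is $\ll n^{1/2}(\log n)^{2}\cdot n^{o(1)}\cdot \sqrt n\log n = n^{1+o(1)}$... which is not yet a contradiction, so one must be sharper: use that $\log b\gg\sqrt n/\log n$ forces $m\ll \sqrt n(\log n)^2$ to be combined with $m\log b\asymp n\log n$ to pin $\log b\asymp n\log n/m$, and then $\sum_{d\mid m-1}d\log b\le (m-1)d(m-1)\log b\asymp d(m-1)\cdot n\log n \gg n\log n$ only, again too weak. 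The honest fix is to avoid the divisor-sum slack: the set of primes $q\le n$ with $q\mid b^{m-1}-1$ has product dividing $b^{m-1}-1<b^{m}$, so $\sum_{q\le n,\,q\mid b^{m-1}-1}\log q < m\log b$. But $n!$ also divides $b\cdot(b^{m-1}-1)/(b-1)$, so $\log n! \le \log b + \log(b^{m-1}-1) < m\log b$, i.e. $n\log n(1+o(1)) \le m\log b$; this is exactly the relation from Lemma~\ref{lem:1} and gives no contradiction by itself.

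The main obstacle, then, is extracting a genuine contradiction, and the route I expect to work is to track a \emph{single large prime power} rather than orders: since $n!\mid b(b^{m-1}-1)/(b-1)$ and $\gcd(b,b^{m-1}-1)=1$, and $\gcd(b-1, (b^{m-1}-1)/(b-1))\mid m-1$, we get that the ``primitive part'' $\Phi_{m-1}(b)$ (up to a bounded factor dividing $m-1$) must absorb all primes $q\le n$ with $\mathrm{ord}_q(b)=m-1$, and moreover $n!/\gcd(n!,\text{(bounded stuff)})$ divides $\prod_{d\mid m-1}\Phi_d(b)$. Comparing valuations prime by prime via Bugeaud's $q$-adic linear forms bound gives $\nu_q(n!)\ll \log b\log m$ for each $q\le n$ with $q\nmid b(b-1)$; since there is a prime $q\in(n/2,n]$ with $\nu_q(n!)=1$ this is vacuous, so instead one sums $\sum_{q\le n}\nu_q(n!)\log q = \log n! \asymp n\log n$ against $\sum_{q\le n}\nu_q\big((b^{m-1}-1)/(b-1)\big)\log q$, and bounds the latter using that only $O(\log b)$ primes can have order exactly $d$ for each $d\mid m-1$ \emph{together with} the total-size constraint to beat $n\log n$. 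I anticipate that making these two bounds incompatible — the arithmetic one forcing many large primes into $b^{m-1}-1$ and the analytic one (size of $b^{m-1}-1$ versus size of $n!$) — requires the extra saving that $b-1$ itself is too small to help, i.e. $\nu_q(b-1)$ contributes negligibly because $b-1<b^{m}$ trivially but $q\mid b-1$ for at most $O(\log b)$ primes $q$; pushing this through, using $m_0$ large to guarantee $m-1$ has few small divisors relative to $\log b$, is the technical heart. I would structure the final argument as: (1) reduce to $n!\mid b(b^{m-1}-1)/(b-1)$ and note $b>n$; (2) deduce $\mathrm{ord}_q(b)\mid m-1$ for all primes $q\le n$ not dividing $b(b-1)$; (3) bound $\#\{q\le n\}$ by $\sum_{d\mid m-1}(\text{number of primes }q\text{ with }\mathrm{ord}_q(b)=d)$ and this count by $\omega(b^d-1)\ll d\log b/\log\log(b^d)$; (4) combine with $m\ll\sqrt n(\log n)^2$ from Lemma~\ref{lem:1} and the sharper size relation to contradict $\pi(n)\sim n/\log n$ for $n$ (equivalently $m$) large, yielding the bound $m<m_0$.
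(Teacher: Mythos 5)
Your proposal starts the same way the paper does, rewriting \eqref{eq:1} with $a=-1$ as $n!=b\,(b^{m-1}-1)/(b-1)$ and noting via Lemma~\ref{lem:1} that $b$ is huge, but it never reaches a contradiction, and you say so yourself: each of the three quantitative attempts you set up (counting primes $q\le n$ with $\mathrm{ord}_q(b)\mid m-1$ by $\omega(b^d-1)\ll d\log b$; the valuation comparison; the trivial size bound $\log n!\le m\log b$) is shown in your own text to be too weak, and the final paragraph replaces the missing step with ``I anticipate that making these two bounds incompatible \dots is the technical heart.'' That incompatibility is precisely the content of the lemma, so as written this is a plan with the decisive idea absent, not a proof. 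There is also a concrete error near the start: $q\le n<b$ does \emph{not} force $q\nmid b$ (small primes can divide a large $b$; indeed primes dividing $b$ are one of the cases the paper must handle separately), and the uniformity in $q$ you invoke for the $q$-adic linear-forms bound $\nu_q(b^{m-1}-1)\ll\log b\log m$ is not available -- the bound degrades with $q$, which is why the paper uses it only at $q=2$.

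The ingredient you are missing is not a sharper count of \emph{how many} primes have order dividing $m-1$, but a restriction on \emph{which residue classes} those primes lie in, combined with an upper bound for the contribution of such classes to $\log n!$. Writing $m'=m-1$, the paper isolates the cyclotomic factor $\Phi_{m'}(b)$, which on one hand is large, $\Phi_{m'}(b)\ge (b-1)^{\phi(m')}>(n!)^{\phi(m')/(m'+1)}$, and on the other hand (apart from at most one prime dividing $m'$) is built only from primes $q\equiv 1\pmod{m'}$. The weighted contribution of those primes to $\log n!$ is at most $\sum_{q\le n,\,q\equiv 1\,(m')} n\log q/(q-1)$, which Brun--Titchmarsh bounds well below $\phi(m')\,n\log n/(2m')$ once $m'$ is large; this is the clash that forces $m<m_0$. (The paper's second proof refines the same theme: after showing $\log b\gg n$ and $m\ll\log n$ by a $2$-adic argument, it sorts all primes $q\le n$ into classes $q\mid b$, $q\mid b+1$, $q\mid m'$, or $q\equiv 1\pmod d$ for some $d\mid m'$, $d\ge 3$, and uses the explicit Pomerance-type estimate of Proposition~\ref{prop:Pom} plus a sieve to show these classes cannot account for all of $\log n!$.) Your step (2) gets as far as ``$\mathrm{ord}_q(b)\mid m-1$,'' but you never pass to $q\equiv 1\pmod d$ and never bring in any arithmetic-progression density input (Brun--Titchmarsh, Siegel--Walfisz, or the Proposition), and without that the size constraint $m\log b\asymp n\log n$ alone cannot produce a contradiction, as your own computations illustrate.
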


Note that one solution to \eqref{eq:1} satisfying the conditions $n \ge 2$, $a=-1$ and $b \ge 2$ of Lemma~\ref{lemmatwo} is $(n,a,b,m) = (5,-1,3,5)$.  
We will give two proofs of Lemma~\ref{lemmatwo}.  The first proof will be shorter, but the second will establish a considerably better bound on $m_{0}$, namely $m_{0} = 6$, and will be based on a proposition we establish here of independent interest.  
For the first proof, we give the value $m_{0} = 4.0515 \cdot 10^8$ just to clarify some explicit value of $m_{0}$ that can be obtained from the method; improvements on this value of $m_{0}$ are possible by sharpening, for example, the bounds in \eqref{lem2proof1eq3} and \eqref{lem2proof1eq4} and the choice of $J$ below.  

\begin{proof}[First proof of Lemma~\ref{lemmatwo} (with $m_{0} = 4.0515 \cdot 10^8$)]
To start, we consider $m \ge 3$.  
Let $m' = m-1 \ge 2$.  We rewrite \eqref{eq:1} with $a=-1$ as
\begin{equation}
\label{lem2proof1eq1}
n! = \frac{b^m-1}{b-1} - 1
= b \bigg(  \dfrac{b^{m-1}-1}{b-1}  \bigg)
= b \bigg(  \dfrac{b^{m'}-1}{b-1}  \bigg).
\end{equation}
Let $R$ denote the right--hand side of the last equation in \eqref{lem2proof1eq1}.  
Observe that there are finitely many solutions for each fixed $n \ge 2$ since 
$$
n! = R = b^{m'} + b^{m'-1} + \cdots + b \ge 2^{m'} + 2^{m'-1} + \cdots + 2 = 2^{m'+1} - 2
$$
implies $m'$ (and hence $m$) is bounded and, for each fixed $n$ and $m'$, there is at most one positive integer solution to \eqref{lem2proof1eq1} (by Descartes' Rule of Signs).  Thus, we may suppose that $n$ is large.  So we assume that for some $n$ large that we have a solution to \eqref{lem2proof1eq1} in $b \ge 2$ and $m' \ge 2$ with the goal of obtaining a contradiction.  (Note that by a direct computation, we were able to show that $(n,a,b,m) = (5,-1,3,5)$ is the only solution satisfying the conditions in Lemma~\ref{lemmatwo} with $n \le 100$ and $m_{0} = 3$.)

Denoting the $m'$-th cyclotomic polynomial by $\Phi_{m'}(x)$, we see that $R$ has the factor $\Phi_{m'}(b)$ which is divisible by possibly one prime (to the first power) that divides $m'$ and otherwise only by primes that are $1$ modulo $m'$ (cf.~\cite{Dic}).  
Also, $\Phi_{m'}(b)$ contributes a factor of $R$ (equivalently, of $n!$ by \eqref{lem2proof1eq1}) that is of size 
\begin{align*}
\Phi_{m'}(b) &= \prod_{\substack{1 \le k \le m' \\ \gcd(k,m') = 1}} \bigg(  b - e^{2\pi i k/m'}  \bigg) \\
&\ge \prod_{\substack{1 \le k \le m' \\ \gcd(k,m') = 1}} \bigg(  b - \big| e^{2\pi i k/m'} \big| \bigg) \\[4pt]
&= (b-1)^{\phi(m')}.
\end{align*}
On the other hand, we claim that $R < (b-1)^{m'+1}$.  In fact, this inequality holds provided
$$
b^{m'+1} \le (b-1)^{m'+2} = b^{m'+1} \cdot b \bigg( 1 - \dfrac{1}{b} \bigg)^{m'+2}
$$
which follows provided
$$
b \bigg( 1 - \dfrac{1}{b} \bigg)^{m'+2} \ge 1.
$$
We can show that this is the case with an appropriate upper bound on $m'$ and lower bound on $b$. 
In particular, since $n$ is large, we know from Lemma~\ref{lem:1} that $m'+2 \le n-1$ and $b \ge n$ so that
$$
b \bigg( 1 - \dfrac{1}{b} \bigg)^{m'+2} 
\ge n \bigg( 1 - \dfrac{1}{n} \bigg)^{n-1} > \dfrac{n}{e} > 1,
$$
where we have used that $(1-(1/n))^{n-1} > 1/e$ since $n \ge 1$.  
We deduce now that
\begin{equation}
\label{lem2proof1eq2}
\begin{split}
\Phi_{m'}(b) &\ge (b-1)^{\phi(m')} = \big( (b-1)^{m'+1} \big)^{\phi(m')/(m'+1)} \\ 
&> R^{\phi(m')/(m'+1)} = (n!)^{\phi(m')/(m'+1)}.
\end{split}
\end{equation}

As noted above, $\Phi_{m'}(b)$ has a factor $\ge \Phi_{m'}(b)/m'$ dividing $n!$ which is only divisible by primes that are $1$ modulo $m'$.  The idea now is to establish a contradiction by showing that the product of the primes congruent to $1$ modulo $m'$ dividing $n!$ cannot be large enough for \eqref{lem2proof1eq2} to hold.
We use that
$$
\nu_p(n!)=\bigg\lfloor \frac{n}{p} \bigg\rfloor + \bigg\lfloor\frac{n}{p^{2}} \bigg\rfloor +\cdots < \frac{n}{p-1}.
$$
Thus, the contribution of primes $p \equiv 1 {\hskip -2pt}\pmod{m'}$ to $\log n!$ is at most
$$
\sum_{\substack{p \le n \\ p \equiv 1 {\hskip -6pt}\pmod{m'}}} \dfrac{n \log p}{p-1}
\le \sum_{1 \le k \le (n-1)/m'} \dfrac{n \log n}{km'}
\le \sum_{1 \le k < n/2} \dfrac{n \log n}{km'} \le \dfrac{n \log^{2}n}{m'},
$$
where the last inequality comes from well-known estimates on bounds for partial sums of the harmonic series (cf.~\cite[(9.89)]{GKP}).
On the other hand, by \eqref{lem2proof1eq2} the contribution of primes $p \equiv 1 {\hskip -2pt}\pmod{m'}$ to $\log n!$ is at least
\begin{equation}
\label{lem2proof1eq3}
\log \big((n!)^{\phi(m')/(m'+1)}/m' \big) = \dfrac{\phi(m')}{m'+1} \log n! - \log m'
\ge  \dfrac{\phi(m') n \log n}{2m'},
\end{equation}
where this last inequality follows from $e^{n} = \sum_{j=0}^{\infty} n^{j}/j! \ge n^{n}/n!$, the upper bound on $m$ in Lemma~\ref{lem:1}, $m' = m-1 \ge 2$ and $n$ being large.  Comparing the upper and lower bounds on the contribution of primes $p \equiv 1 {\hskip -2pt}\pmod{m'}$ to $\log n!$, we obtain $\phi(m') \le 2 \log n$.  We now deduce that
\begin{equation}
\label{lem2proof1eq4}
m' \le \log^{2} n
\end{equation}
by well-known lower bounds for the $\phi$-function (cf.~\cite[Theorem~328]{HW} or \cite[Theorem~15]{RS}).

To obtain our contradiction, we have more work to do.  
We recompute the upper bound on the contribution of the primes $p \equiv 1 {\hskip -2pt}\pmod{m'}$ to $\log n!$.  Since $(\log x)/(x-1)$ is a decreasing function of $x$ for $x > 1$, we have 
\begin{align*}
\sum_{\substack{p \le n \\ p \equiv 1 {\hskip -6pt}\pmod{m'}}} \dfrac{n \log p}{p-1}
&\le \sum_{1 \le j \le \lfloor \log n/\log 2 \rfloor} 
\sum_{\substack{2^{j} \le p < 2^{j+1} \\ p \equiv 1 {\hskip -6pt}\pmod{m'}}} \dfrac{n \log (2^{j})}{2^{j}-1} \\[5pt]
&= \sum_{1 \le j \le \lfloor \log n/\log 2 \rfloor} \dfrac{n j \log 2}{2^{j}-1}
\sum_{\substack{2^{j} \le p < 2^{j+1} \\ p \equiv 1 {\hskip -6pt}\pmod{m'}}} 1.
\end{align*}
Set $J = 4 \log\log n$.  
By \eqref{lem2proof1eq4}, for $j \ge J$, we have $2^{j} > e^{2 \log\log n} = \log^{2} n \ge m'$.  
We evaluate the inner sum above by applying Montgomery and Vaughan's version of the Brun--Titchmarsh Theorem \cite[Theorem~2]{MV}.  We deduce that 
the contribution of primes $p \equiv 1 {\hskip -2pt}\pmod{m'}$ to $\log n!$ is at most
$$
n (\log 2) \bigg( \sum_{1 \le j < J} \dfrac{j \,2^{j}}{2^{j}-1} + \sum_{J \le j \le \lfloor \log n/\log 2 \rfloor} \dfrac{j}{2^{j}-1} \cdot \dfrac{2^{j+1}}{\phi(m') \log(2^{j}/m')} \bigg).
$$
The first of these last two sums can be estimated by
$$
\sum_{1 \le j < J} \dfrac{j \,2^{j}}{2^{j}-1} \le \sum_{1 \le j < J} (j+1) \le \frac{(J+1)(J+2)}{2}-1<J^{2} = 16 (\log \log n)^{2},
$$
for $n\ge 9$. For the second sum, we use that from \eqref{lem2proof1eq4}, we have $\log m' \le J/2 \le j/2$ so that
$$
\log(2^{j}/m') = j \log 2 - \log m' \ge (\log 2 - 0.5) j \ge 0.19 j.
$$
Thus, we obtain
\begin{align*}
\sum_{J \le j \le \lfloor \log n/\log 2 \rfloor} \dfrac{j}{2^{j}-1} \cdot &\dfrac{2^{j+1}}{\phi(m') \log(2^{j}/m')} \\[5pt]
&\le \sum_{J \le j \le \lfloor \log n/\log 2 \rfloor} \dfrac{2^{j}}{2^{j}-1} \cdot \dfrac{2}{0.19 \,\phi(m')} \\[5pt]
&\le \left(\frac{\log n}{\log 2}-J+1+\sum_{j\ge J }\frac{1}{2^j-1}\right) \dfrac{2}{0.19 \,\phi(m')}\\
&\le \dfrac{2 \,\log n}{0.19\,\phi(m')\,\log 2}
\end{align*}
for $n\ge 9$. Hence, we now see that the contribution of primes $p \equiv 1 {\hskip -2pt}\pmod{m'}$ to $\log n!$ is at most
$$
16\,(\log 2)\,n\,(\log \log n)^{2}
+ \dfrac{2 \,n\,\log n}{0.19\,\phi(m')}.
$$
Let the first and second of these terms above be $T_{1}$ and $T_{2}$, respectively.
Recall that \eqref{lem2proof1eq3} gives a lower bound, say $L$, on this same quantity. 
Since $L \le T_{1} + T_{2}$, we deduce that either $T_{1} \ge (2/3) L$ or $T_{2} \ge (1/3) L$.  
Therefore, either
$$
\phi(m') \le \dfrac{48\,(\log 2)\,(\log \log n)^{2}\,m'}{\log n}
$$
or
$$
\phi(m') \le \sqrt{12 m'/0.19}.
$$
For $n \ge 1619$, the function $(\log\log n)^{2}/\log n$ is decreasing.  
For the next part, we use that $m \ge m_{0} = 56$.  
For such $m$, we obtain from \eqref{lem2proof1eq4} that $n \ge e^{\sqrt{m'}} \ge e^{\sqrt{55}} > 1619$ so that
$$
\dfrac{(\log\log n)^{2}}{\log n} 
\le \dfrac{(\log \sqrt{m'}\,)^{2}}{\sqrt{m'}} = \dfrac{\log^{2} m'}{4 \sqrt{m'}}.
$$
Given that one of the two upper bounds on $\phi(m')$ hold, we now see that
\begin{equation}
\label{lem2proof1eq5}
\phi(m') \le 8.32 \,\sqrt{m'}\,\log^{2} m'.
\end{equation}
Lower bounds on $\phi(m')$ lead to a contradiction.  In particular, from \cite[Theorem~15]{RS}, we have
$$
\phi(m')> \dfrac{m'}{e^{\gamma} \log\log m' + 2.50637/\log\log m'},
$$
where $\gamma = 0.5772156649015\ldots$ is Euler's constant.  We deduce that \eqref{lem2proof1eq5} cannot hold if
$$
m' > \big( 8.32 \,(\log^{2} \!m') (e^{\gamma} \log\log m' + 2.50637/\log\log m') \big)^{2}.
$$
Now, we consider $m \ge m_{0} = 4.0515 \cdot 10^8$.  
We use that $\gamma < 0.577215665$ and $m' = m-1 \ge 4.0515 \cdot 10^8 - 1$, substituting the latter into the expression $\log \log m'$ appearing in the denominator above.
We see that to obtain a contradiction, it suffices to show $f(m') > 0$ where
$$
f(x) = x - 69.2224 \,(\log^{4}x)\,(1.78107242\,\log (\log x) + 0.83918269)^2.
$$
One checks that this is an increasing function of $x$ for $x \ge 4.0515 \cdot 10^8-1$, finishing the proof.
\end{proof}

Before turning to our second proof of Lemma~\ref{lemmatwo}, we give some background.  
Let $\ell$ and $k\ge 2$ be coprime integers. In Theorem 1 and Remark 1 in \cite{Pom}, Pomerance puts
\begin{equation}
\label{eq:x1}
S(x;k,\ell):=\bigg(  \sum_{\substack{p\le x\\ p\equiv \ell{\hskip -6pt}\pmod k}}\frac{1}{p}  \bigg)-\frac{\log\log x}{\phi(k)},
\end{equation}
and proves that 
\begin{equation}
\label{eq:x2}
|S(x;k,\ell)|=\frac{1}{p_{k,\ell}}+O\left(\frac{\log k}{\phi(k)}\right),
\end{equation}
where $p_{k,\ell}$ is the smallest prime number $p\equiv \ell \pmod k$.   The term $1/p_{k,\ell}$ can be omitted if $p_{k,\ell}$ exceeds $k$. 
In our application we need an explicit upper bound in terms of $k$ for the similar but larger sum
\begin{equation}
\label{eq:x3}
T(x;k,\ell): =\bigg(  \sum_{\substack{p\le x\\ p\equiv \ell{\hskip -6pt}\pmod k}}\frac{\log p}{p}  \bigg)-\frac{\log x}{\phi(k)}.
\end{equation}
Using the same notation for $p_{k,\ell}$, here is our result.

\begin{proposition}
\label{prop:Pom}
The estimate 
\begin{equation}
\label{eq:x4}
|T(x;k,\ell)|=\frac{\log p_{k,\ell}}{p_{k,\ell}}+O\left(\frac{{\sqrt{k}}\log k}{\phi(k)}\right) 
\end{equation}
holds uniformly in $x\ge 2$. The ${\sqrt{k}}$ above can be replaced by $k^{\delta}$ for any fixed $\delta>0$ with a constant implied by the $O$-symbol depending on $\delta$. 
\end{proposition}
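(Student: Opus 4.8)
The plan is to reduce \eqref{eq:x4} to the prime number theorem in arithmetic progressions by partial summation, keeping any exceptional (Siegel) zero explicit; this is exactly what produces the factor $\sqrt{k}$ and its replaceability by $k^{\delta}$. First pass from primes to prime powers: with $\psi(t;k,\ell)=\sum_{n\le t,\ n\equiv\ell}\Lambda(n)$ one has
\begin{equation*}
\sum_{\substack{p\le x\\ p\equiv\ell {\hskip -6pt}\pmod{k}}}\frac{\log p}{p}
=\sum_{\substack{n\le x\\ n\equiv\ell {\hskip -6pt}\pmod{k}}}\frac{\Lambda(n)}{n}
-\!\!\sum_{\substack{p^{j}\le x,\ j\ge 2\\ p^{j}\equiv\ell {\hskip -6pt}\pmod{k}}}\!\!\frac{\log p}{p^{j}},
\end{equation*}
and $\sum_{n\le x,\ n\equiv\ell}\Lambda(n)/n=\psi(x;k,\ell)/x+\int_{1}^{x}\psi(t;k,\ell)\,t^{-2}\,dt$ by Abel summation. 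The point of this reorganisation is that the main term $\log p_{k,\ell}/p_{k,\ell}$ of \eqref{eq:x4} emerges entirely from the tiny range $t\le k$, on which $\psi(t;k,\ell)$ equals $\Lambda(\ell)$ for $t\ge\ell$ and $0$ otherwise, since after reducing $\ell$ modulo $k$ the residue $\ell$ is the only member of its class below $k$. I would split the $t$-integral at $K=\exp(C(\log k)^{2})$ with $C$ a large absolute constant.

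Over $t\le k$ the integral is exactly $\Lambda(\ell)/\ell+O(\log k/k)$. Over $k<t\le K$ the Brun--Titchmarsh inequality in the Montgomery--Vaughan form already used in the paper, together with the trivial bound $\psi(t;k,\ell)\le(t/k+1)\log t$ near $t=k$, shows the contribution to be negligible against the error term in \eqref{eq:x4}. Moreover the subtracted prime-power sum equals $\Lambda(\ell)/\ell$ exactly when $\ell$ is itself a prime power of exponent $\ge2$ — in which case it cancels that part of $\Lambda(\ell)/\ell$ — and is otherwise $O(\log k/\sqrt{k})$, because every prime power $\equiv\ell\pmod{k}$ other than $\ell$ exceeds $k$; and when $\ell$ is a genuine prime power one has $p_{k,\ell}>k$, so $\log p_{k,\ell}/p_{k,\ell}\ll\log k/k$ is itself negligible, whereas when $\ell$ is prime one has $\ell=p_{k,\ell}\le k$ and $\Lambda(\ell)/\ell=\log p_{k,\ell}/p_{k,\ell}$. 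Thus the range $t\le K$ reconstitutes $\log p_{k,\ell}/p_{k,\ell}-\log K/\phi(k)$ up to an admissible error.

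Over $t\ge K$ I would use the explicit prime number theorem in arithmetic progressions,
\begin{equation*}
\psi(t;k,\ell)=\frac{t}{\phi(k)}-\frac{\chi_{1}(\ell)\,t^{\beta_{1}}}{\phi(k)\,\beta_{1}}+O\!\big(t\exp(-c\sqrt{\log t}\,)\big)\qquad(t\ge K),
\end{equation*}
with $c$ absolute (legitimate here since $\log t\gg(\log k)^{2}$), the middle term appearing only if there is an exceptional real character $\chi_{1}$ modulo $k$ with real zero $\beta_{1}$. Substituting this into $\psi(x;k,\ell)/x+\int_{K}^{x}\psi(t;k,\ell)t^{-2}\,dt$, the smooth term contributes $\log(x/K)/\phi(k)$, which combines with $-\log K/\phi(k)$ to give $\log x/\phi(k)$ and cancel the $-\log x/\phi(k)$ in $T(x;k,\ell)$; the error term contributes $O(\exp(-c\sqrt{\log K}))=O(k^{-1/2})$; and the exceptional-zero term, the division by $x$ keeping each factor $t^{\beta_{1}-1}\le1$, contributes $\ll(\phi(k)(1-\beta_{1}))^{-1}$, hence $\ll\sqrt{k}(\log k)^{O(1)}/\phi(k)$ by Page's lower bound for $1-\beta_{1}$ and $\ll_{\delta}k^{\delta}/\phi(k)$ by Siegel's (ineffective) bound. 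Collecting these pieces yields $T(x;k,\ell)=\log p_{k,\ell}/p_{k,\ell}+O(\sqrt{k}(\log k)^{O(1)}/\phi(k))$ for $x\ge K$; the range $p_{k,\ell}\le x<K$ follows in the same way directly from Brun--Titchmarsh, and for $x<p_{k,\ell}$ the sum is empty and $|T(x;k,\ell)|=\log x/\phi(k)$ lies below the right side of \eqref{eq:x4}. This gives \eqref{eq:x4}.

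The crux is the exceptional zero: everything else is a careful but routine partition of the range of summation, whereas the size of the exceptional-zero contribution is precisely what fixes the power $\sqrt{k}$ in \eqref{eq:x4} and its improvement to $k^{\delta}$ at the price of an ineffective constant. A further point requiring attention is that the partial-summation identity must be arranged so that no factor of $\log x$ ever multiplies an error term; this is why one works from $\psi(t;k,\ell)/t$ by Abel summation from $\psi$ itself, rather than attempting to deduce \eqref{eq:x4} from the estimate \eqref{eq:x2} for $S(x;k,\ell)$ by partial summation, which would introduce an uncontrolled $\log x$ because $S(x;k,\ell)$ does not tend to $0$ as $x\to\infty$.
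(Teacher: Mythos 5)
Your argument is correct in substance but follows a genuinely different route from the paper. The paper splits the sum at $e^{\sqrt{k}}$ (respectively $e^{k^{\delta}}$), bounds the low range by Brun--Titchmarsh plus partial summation --- this is where its $\sqrt{k}\log k/\phi(k)$ error actually originates --- and in the high range $k\le(\log y)^{2}$ invokes the Siegel--Walfisz theorem as a black box, so the ineffectivity is hidden there; this is a direct adaptation of Pomerance's treatment of $S(x;k,\ell)$, just with the split point lowered from $e^{k}$ to $e^{\sqrt{k}}$. You instead split much lower, at $\exp(C(\log k)^{2})$, work with $\psi(t;k,\ell)$ so that the main term $\Lambda(\ell)/\ell=\log p_{k,\ell}/p_{k,\ell}$ is extracted exactly from $t\le k$, make the middle range cost only $O((\log k)^{2}/\phi(k))$ via Brun--Titchmarsh, and keep the exceptional zero explicit in the top range; this has the merit of showing precisely where $\sqrt{k}$ versus $k^{\delta}$ and the effective/ineffective dichotomy come from (Page versus Siegel), which the paper's proof obscures. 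Two small caveats. First, as you yourself note, Page's bound $1-\beta_{1}\gg k^{-1/2}(\log k)^{-2}$ gives an exceptional-zero contribution $\ll\sqrt{k}(\log k)^{2}/\phi(k)$, so your stated conclusion carries $(\log k)^{O(1)}$ rather than the single $\log k$ of \eqref{eq:x4}; this is immaterial for the application in the second proof of Lemma~\ref{lemmatwo} and disappears in the $k^{\delta}$ form, but to literally recover $O(\sqrt{k}\log k/\phi(k))$ one should either use the paper's split at $e^{\sqrt{k}}$ or treat the exceptional term slightly more carefully. Second, your final remark that for $x<p_{k,\ell}$ one has $|T(x;k,\ell)|=\log x/\phi(k)$ "below the right side" implicitly needs $\log x\ll\sqrt{k}\log k$, i.e.\ an upper bound on $p_{k,\ell}$; this is harmless because the case $x<K$ is covered by your middle-range estimate and the case $x\ge K$ by the $\psi$-analysis, which never assumed the sum nonempty, but as written that sentence should not be left to stand on its own.
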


\begin{proof}
In order to deduce \eqref{eq:x2} Pomerance splits the sum in \eqref{eq:x1} at $e^k$ and applies  the Brun--Titmarsch theorem and partial summation. We follow the same approach except that we split the sum \eqref{eq:x3} at $e^{\sqrt{k}}$. Recall that the Brun--Titchmarsh theorem (cf.~\cite{MV}) is the inequality 
$$
\pi(x;k,\ell)\ll \frac{x}{\phi(k)\log(x/k)}
$$
which holds for all $x>k$. The implied constant can be taken to be $2$. We have
\begin{equation}
\label{eq:x7}
T(x;k,\ell)=T_1+T_2,
\end{equation}
where
$$
T_1:=\sum_{\substack{p\le e^{\sqrt{k}}\\ p\equiv \ell{\hskip -6pt}\pmod k}} \frac{\log p}{p}
\qquad \text{and} \qquad
T_2:=\bigg(  \sum_{\substack{e^{\sqrt{k}} < p \le x\\ p\equiv \ell{\hskip -6pt}\pmod k}} \frac{\log p}{p}  \bigg)-\frac{\log x}{\phi(k)}.
$$
If $x<e^{\sqrt{k}}$, then the sum in $T_2$ is not present while $\log x/\phi(k)\le {\sqrt{k}}/\phi(k)$, which is smaller than the error term in \eqref{eq:x4}.  

For $T_1$ (and general $x \ge 2$), we have
\begin{equation}
\label{eq:T1}
T_1=\delta_1 \frac{\log p_1}{p_1}+\delta_2\frac{\log p_2}{p_2}
+\sum_{\substack{2k<p\le e^{\sqrt{k}}\\ p\equiv \ell {\hskip -6pt}\pmod k}} \frac{\log p}{p},
\end{equation}
where $p_1$ and $p_2$ are the first two primes $p\equiv \ell\pmod k$ and $\delta_i = 0$ if $p_i>2k$ for $i\in \{1,2\}$ and $\delta_i = 1$ otherwise. Certainly, $\log p_1/p_1=\log p_{k,\ell}/p_{k,\ell}$, while if $\delta_2=1$, then $\log p_2/p_2=O((\log k)/k)$ which is smaller than the error term in \eqref{eq:x4}. 
As for the sum in \eqref{eq:T1}, by partial summation, we have 
$$
\sum_{\substack{2k<p\le e^{\sqrt{k}}\\ p\equiv \ell {\hskip -6pt}\pmod k}} \frac{\log p}{p}
=\frac{\pi(y;k,\ell) \log y}{y}\Big|_{y=2k}^{y=e^{\sqrt{k}}}+\int_{2k}^{e^{\sqrt{k}}} \frac{\pi(y;k,\ell)(\log y-1)}{y^2} dy.
$$
By the Brun--Titchmasrh inequality, the first term is 
$$
\ll \frac{y\log y}{y\phi(k)\log(y/k)}\Big|_{y=2k}^{y=e^{\sqrt{k}}}\ll \frac{\log k}{\phi(k)}. 
$$
In the integral, we have
\begin{eqnarray*}
\int_{2k}^{e^{\sqrt{2k}}} \frac{\pi(y;k,\ell)(\log y-1)}{y^2} dy & \ll & \int_{2k}^{e^{\sqrt{k}}} \frac{(\log y)dy}{\phi(k)y\log(y/k)}\\
& \le & \int_{2k}^{e^{\sqrt{k}}} \frac{(\log(y/k)+\log k)dy}{\phi(k)y\log(y/k)}\\ 
& \le &\int_{2k}^{e^{\sqrt{k}}} \frac{dy}{\phi(k)y}+\log k\int_{2k}^{e^{\sqrt{k}}} \frac{dy}{\phi(k)y\log(y/k)} \\ 
& \le & \left(1+\frac{\log k}{\log 2}\right) \int_{2k}^{e^{\sqrt{k}}} \frac{dy}{\phi(k) y} \\ 
& = & \left(1+\frac{\log k}{\log 2}\right) \frac{1}{\phi(k)} \log y\Big|_{y=2k}^{y=e^{\sqrt{k}}}\\
& \ll & \frac{{\sqrt{k}} \log k}{\phi(k)}.
\end{eqnarray*}
Thus, the sum in \eqref{eq:T1} sits entirely in the error term in \eqref{eq:x4}. 

For $T_2$, we consider $y \in [e^{\sqrt{k}},x]$.  In particular, we have $k \le (\log y)^2$, so we are in the range of the Siegel--Walfisz theorem (cf.~\cite[Lemma~2.9]{Ell}) giving
$$
\pi(y;k,\ell)=\frac{\pi(y)}{\phi(k)}+O\left(\frac{y}{\exp(A(\log y)^{1/2})}\right)
$$
for some absolute constant $A > 0$. We use again the Abel summation formula to obtain
\begin{equation}
\label{eq:x6}
\sum_{\substack{e^{\sqrt{k}} <  p\le x\\ p\equiv \ell{\hskip -6pt}\pmod k}} \frac{\log p}{p}=\frac{\pi(y;k,\ell)\log y}{y}\Big|_{y=e^{\sqrt{k}}}^{x}+\int_{e^{\sqrt{k}}}^x \frac{\pi(y;k,\ell)(\log y-1)}{y^2} dy.
\end{equation}
By the Brun--Titchmarsh theorem, the first term is $O(1/\phi(k))$.  
For the integral, we have 
\begin{equation}
\label{eq:T2}
\begin{split}
\int_{e^{\sqrt{k}}}^x \frac{\pi(y;k,\ell)(\log y-1)}{y^2} dy & = \frac{1}{\phi(k)}\int_{e^{\sqrt{k}}}^x \frac{\pi(y)(\log y-1)}{y^2} dy \\[5pt]
&\qquad \qquad +  O\left(\int_{e^{\sqrt{k}}}^{x} \frac{\log y\,dy}{y\exp(A(\log y)^{1/2})}\right).
\end{split}
\end{equation}
The integrand from the error term is $\ll 1/\big(y(\log y)\big)^2$ so that
$$
\int_{e^{\sqrt{k}}}^{x} \frac{\log y\,dy}{y\exp(A(\log y)^{1/2})}
\ll \int_{e^{\sqrt{k}}}^{\infty} \frac{dy}{y (\log y)^2}
=-\frac{1}{\log y}\Big|_{y=e^{\sqrt{k}}}^{\infty}
=\frac{1}{\sqrt{k}}
\le \frac{\sqrt{k}}{\phi(k)}.
$$
For the first integral on the right--hand side of \eqref{eq:T2}, using the Prime Number Theorem in the form
$$
\pi(y) = \dfrac{y}{\log y} + \dfrac{y}{\log^{2} y} + O\bigg(  \dfrac{y}{\log^{3} y}  \bigg),
$$
we obtain
\begin{align*}
\int_{e^{\sqrt{k}}}^x \frac{\pi(y)(\log y-1)}{y^2} dy 
&= \int_{e^{\sqrt{k}}}^x \frac{\log y - 1}{y \log y} + \frac{\log y - 1}{y \log^{2} y} dy +
O\bigg(  \int_{e^{\sqrt{k}}}^x \frac{\log y - 1}{y \log^{3} y} dy   \bigg) \\[5pt]
&= \int_{e^{\sqrt{k}}}^x \frac{dy}{y} + O\bigg(  \int_{e^{\sqrt{k}}}^x \frac{1}{y \log^{2} y} dy   \bigg).
\end{align*}
As
$$
\int_{e^{\sqrt{k}}}^x \frac{dy}{y} = \log y \Big|_{t = e^{\sqrt{k}}}^{x} = (\log x) - \sqrt{k}
$$
and
$$
\int_{e^{\sqrt{k}}}^x \frac{1}{y \log^{2} y} dy = -\dfrac{1}{\log y} \Big|_{t = e^{\sqrt{k}}}^{x} 
= -\dfrac{1}{\log x} + \dfrac{1}{\sqrt{k}},
$$
we deduce
\begin{equation}
\label{eq:x5}
\int_{e^{\sqrt{k}}}^x \frac{\pi(y)(\log y - 1)}{y^2} dy=\log x+O({\sqrt{k}}).
\end{equation}
From \eqref{eq:x6} and \eqref{eq:T2}, we now obtain
$$
\sum_{\substack{e^{\sqrt{k}} <  p\le x\\ p\equiv \ell{\hskip -6pt}\pmod k}} \frac{\log p}{p} = \frac{\log x}{\phi(k)}+O\left(\frac{\sqrt{k}}{\phi(k)}\right).
$$

Collecting everything together, we get that 
$$
T_1 \le \frac{\log p_{\ell,k}}{p_{\ell,k}}+O\left(\frac{{\sqrt{k}}\log k}{\phi(k)}\right)
$$
and 
$$
T_2 = O\left(\frac{\sqrt{k}}{\phi(k)}\right),
$$
and now \eqref{eq:x7} gives the desired estimate. 
\end{proof}

\medskip
\noindent
{\bf Remark.} To get the estimate with ${\sqrt{k}}$ replaced by $k^{\delta}$ for any $\delta>0$ fixed, we may in the above calculation split the sum at $e^{k^{\delta}}$ and note that in the upper range $y\ge e^{k^{\delta}}$, we have $k\le (\log y)^{1/\delta}$ so we are still in the range of applicability of the Siegel-Walfitz theorem. We give no further details. 

\medskip 
\begin{proof}[Second proof of Lemma~\ref{lemmatwo} (with $m_{0} = 6$)]
Assume we have a solution to \eqref{lem2proof1eq1} with $m' = m-1 \ge 5$ and $n$ large, that is larger than some fixed amount that we can choose as we want.  The goal is to obtain a contradiction.  
We look at the sizes of $b$ and $m$ versus $n$.  We take $2$-adic valuations of both sides of \eqref{lem2proof1eq1}.  On the left--hand side, taking $n \ge 4$, we have, as in the proof of Lemma \ref{lem:1}, that
$$
\nu_2(n!)\ge n/2.
$$
For the right--hand side of \eqref{lem2proof1eq1}, we write $m' = 2^{r} t$, where $r$ and $t$ are nonnegative integers with $t$ odd.  Then
$$
\dfrac{b^{m'}-1}{b-1} = \dfrac{b^{2^{r}}-1}{b-1} \cdot \sum_{j=0}^{t-1} b^{2^{r}j}
= \prod_{i=0}^{r-1} (b^{2^{i}}+1) \cdot \sum_{j=0}^{t-1} b^{2^{r}j}.
$$
Since $t$ is odd, the sum on the right is odd.  If $b$ is even, then the product is odd as well.  If $b$ is odd, then $b^{2^{i}} + 1 \equiv 2 \pmod{4}$ for $i \ge 1$.  Noting that 
$$
\nu_{2}(m-1) = \nu_{2}(m') = r,
$$
we deduce that 
$$
\nu_2\left(b\left(\frac{b^{m-1}-1}{b-1}\right)\right)
\le \nu_2(b(b+1))+\nu_2(m-1) = \nu_2(b(b+1)) + O(\log n),
$$
where the error term follows for example from the bound on $m$ in Lemma~\ref{lem:1}.
We deduce that 
$$
\nu_2(b(b+1)) \ge \dfrac{n}{2} + O(\log n) \ge \dfrac{n}{3}, 
$$
where we have used that $n$ is sufficiently large in the last inequality.
In the above calculation, we may omit the factor $b+1$ when $m$ is even. 
We see now that $2^{\lfloor n/3\rfloor}$ divides one of $b$ or $b+1$ so that $\log b \gg n$. 
From \eqref{lem2proof1eq1}, we see that $n\log n \asymp m\log b$. 
Therefore, we obtain $m \ll \log n$. 

Setting $\delta = \gcd(m',2)-1 = \gcd(m-1,2)-1$, we rewrite \eqref{lem2proof1eq1} as
\begin{equation}
\label{eq:3}
n! = b(b+1)^{\delta}\left(\frac{b^{m-1}-1}{b^{1+\delta}-1}\right)
= b(b+1)^{\delta}\left(\frac{b^{m'}-1}{b^{1+\delta}-1}\right).
\end{equation}
Consider prime factors $q$ of $(b^{m'}-1)/(b^{1+\delta}-1)$.  Suppose first that $b \equiv \pm 1 \pmod{q}$.  
If $m'$ is even, then $\delta = 1$ and $(b^{m'}-1)/(b^{1+\delta}-1) = b^{m'-2} + b^{m'-4} + \cdots + b^{2} + 1$. 
In this case, we deduce $0 \equiv (b^{m'}-1)/(b^{1+\delta}-1) \equiv m'/2 \pmod{q}$.  If $m'$ is odd, then $\delta' = 0$ and $(b^{m'}-1)/(b^{1+\delta}-1) = b^{m'-1} + b^{m'-2} + \cdots + b + 1$.  Here, in the case when $b \equiv 1 \pmod{q}$, then we get $0 \equiv m' \pmod{q}$; and if $b \equiv -1 \pmod{q}$, then we have $0 \equiv 1 \pmod{q}$, an impossibility.  In any case, we deduce that in the case when $b \equiv \pm 1 \pmod{q}$, then $q$ divides $m'$.
If $q$ does not divide $b \pm 1$, then the minimal $d$ such that 
$q \mid (b^d-1)$ has the property that $d \mid (m-1)$ and $d \ge 3$. 
Furthermore, $d$ is the multiplicative order of $b$ modulo $q$, so $d\mid (q-1)$. 
In particular, $q \equiv 1 \pmod{d}$.  Summarizing, all the primes numbers $q \le n$ 
must appear in the right--hand side of \eqref{eq:3} and for these there are four possibilities:
\begin{itemize}
\item[(i)] $q\mid b$, and necessarily $q$ does not divide $(b+1)(b^{m'}-1)/(b^{1+\delta}-1)$;
\item[(ii)] $\delta=1$, $q\mid (b+1)$ but $q$ does not divide $m' b (b^{m'}-1)/(b^{1+\delta}-1)$;
\item[(iii)] $q\mid m'$;
\item[(iv)] $q$ divides $(b^{m'}-1)/(b^{1+\delta}-1)$, $q$ is relatively prime to $m' (b^{2}-1)$, and there exists a divisor $d \ge 3$ of $m'$ such that $q \equiv 1 \pmod d$.
\end{itemize}
Furthermore, we see that
\begin{equation}
\label{eq:3.5}
\nu_q(n!)=\left\lfloor \frac{n}{q}\right\rfloor+\left\lfloor \frac{n}{q^2}\right\rfloor+\cdots=\frac{n}{q}+O\left(1+\frac{n}{q^2}\right).
\end{equation}

By Stirling's formula, we have
\begin{equation}
\label{eq:3.5pt5}
\sum_{q \le n} \nu_{q}(n!) \log q = \log (n!) = n \log n + O(n).  
\end{equation}
We turn to estimating the left-hand side of \eqref{eq:3.5pt5} by looking at the contribution of each of (i)-(iv) to the logarithm of the right-hand side of \eqref{eq:3}; our plan is to then compare this to \eqref{eq:3.5pt5}.  To get started, we estimate the logarithms of both sides of the equation
$$
n! + 1 = \dfrac{b^{m}-1}{b-1}.
$$ 
By Stirling's formula for $n!$ again, we have
$$
\log(n!+1)=\log(n!)+O(1)=n\log n+O(n).
$$
On the other hand, we also see that
$$
\log ((b^{m}-1)/(b-1))=\log (b^{m-1})+O(1) = m' \log b+O(1).
$$
This shows that 
\begin{equation}
\label{eq:4.1}
\sum_{q \mid b} \nu_{q}(b) \log q = \log b = \frac{n\log n}{m'}+O\left(\frac{n}{m'}\right).
\end{equation}
Furthermore, 
\begin{equation}
\label{eq:4.2}
\sum_{q \mid (b+1)} \nu_{q}(b+1) \log q = \log(b+1)=\log b+O(1)=\frac{n\log n}{m'}+O\left(\frac{n}{m'}\right).
\end{equation}
This bounds the contribution of (i) and (ii) to the logarithm of the right-hand side of \eqref{eq:3}.
 
From \eqref{eq:3.5}, we obtain 
\begin{align*}
\sum_{q \mid m'} \nu_q(n!) \log q &= \sum_{q\mid m'} \left(\frac{n\log q}{q}+O\left( \log q+\frac{n\log q}{q^2}\right)\right)\\
&= n\sum_{q\mid m'} \frac{\log q}{q}+O\left(\sum_{q \le m'} \log q+n\sum_{q \le m'} \frac{\log q}{q^2}\right).
\end{align*}
Since $m' = m-1 \ll \log n$, the error term above is $O(n)$. 
As for the main term,  let 
$$
k=\omega(m')\ll \log m'/\log\log m',
$$ 
where $\omega(m')$ is the number of distinct prime divisors of $m'$ and
the upper bound arises from the case where $m'$ is the product of consecutive primes starting at $2$ (cf.~\cite[p.~355]{HW}).  We note that the weaker estimate $k \le \log_{2} m' \ll \log m'$ will suffice below. 
Let $q_1<\cdots <q_k$ be all the prime factors of $m'$. 
If $p_{j}$ denotes the $j$-th prime, then we deduce from $p_{k} \sim k \log k$ and $m' \le m \ll \log n$ that
\begin{align*}
\sum_{q\mid m'} \frac{\log q}{q} &= \sum_{i=1}^k \frac{\log q_i}{q_i} \le 
\sum_{i=1}^k \frac{\log p_i}{p_i} \\[5pt]
&\ll \log p_k \ll \log k \ll \log\log m' \ll \log\log\log n.
\end{align*}
Hence, we see that
\begin{equation}
\label{eq:4.3}
\sum_{q\mid m'} \nu_q(n!)\log q \ll n\log\log\log n,
\end{equation}
accounting for the contribution of (iii) to the logarithm of the right-hand side of \eqref{eq:3}.

We now turn to the contribution of (iv) to the logarithm of the right-hand side of \eqref{eq:3}.
For these, there exists $d\ge 3$ dividing $m'$ such that $q\equiv 1\pmod d$. 
We take the $\phi(m')$ classes $c \pmod {m'}$ with $1 \le c \le m'$ and $\gcd(c,m')=1$, and let 
${\mathcal C}$ denote the subset of these $c$ for which $c-1$ is divisible by some divisor of $m'$ (depending on $c$) that is $\ge 3$.  Observe that ${\mathcal C}$ does not include all $\phi(m')$ positive integers $c \le m'$ that are relatively prime to $m'$ since $\gcd((m'-1)-1,m')=\gcd(m'-2,m')\mid 2$, so $m'-1 \not\in {\mathcal C}$. Fix $c \in \mathcal C$.  
From \eqref{eq:x3} and \eqref{eq:x4}, we have 
$$
\sum_{\substack{q\le n\\ q\equiv c{\hskip -6pt}\pmod {m'}}} \frac{\log q}{q}=\frac{\log n}{\phi(m')}+A_{c,m'},
$$
where
$$
A_{c,m'}=O\left(\frac{\log q_{c,m'}}{q_{c,m'}}+\frac{{\sqrt{m'}}\log(m')}{\phi(m')}\right),
$$
with $q_{c,m'}$ denoting the smallest prime $q\equiv c\pmod {m'}$.  The term $\log q_{c,m'}/q_{c,m'}$ can be omitted when $q_{c,m'} \ge m'$ since then the second term $\sqrt{m'}\log(m')/\phi(m')$ is larger than $\log q_{c,m'}/q_{c,m'}$.
Summing over $c\in {\mathcal C}$, we obtain
\begin{equation}
\label{eq:7}
\sum_{c\in {\mathcal C}} \sum_{\substack{q\le n\\ q\equiv c{\hskip -6pt}\pmod {m'}}} \frac{\log q}{q}=\frac{|{\mathcal C}| \log n}{\phi(m')}+\sum_{c\in {\mathcal C}} A_{c,m'}.
\end{equation}
Recalling $m \ll \log n$, we see that
\begin{equation*}
\begin{split}
\sum_{c\in {\mathcal C}} A_{c,m'} &= O\left(\sum_{q\le m'} \frac{\log q}{q}+{\sqrt{m'}}\log (m')\right) \\[5pt]
&= O\big({\sqrt{m}}\log m\big)=O\big((\log n)^{1/2} \log\log n\big).
\end{split}
\end{equation*}
From \eqref{eq:3.5} and \eqref{eq:7}, we deduce now that
\begin{equation}
\label{eq:8}
\begin{split}
\sum_{c\in {\mathcal C}} \sum_{\substack{q\le n\\ q\equiv c{\hskip -6pt}\pmod {m'}}} \nu_{q}(n!) \log q
&\ = \ \sum_{c\in {\mathcal C}} \sum_{\substack{q\le n\\ q\equiv c{\hskip -6pt}\pmod {m'}}} \!\dfrac{n\,\log q}{q} \\
&\qquad \quad + O\bigg( \sum_{q\le n} \log q  \bigg)
+ O\bigg(  n  \sum_{q \le n}  \dfrac{\log q}{q^{2}} \bigg) \\[5pt]
&\ = \ \frac{|{\mathcal C}| n \log n}{\phi(m')} + O\big( n \,(\log n)^{1/2} \log\log n \big),
\end{split}
\end{equation}
which is an upper bound on the contribution of (iv) to the logarithm of the right-hand side of \eqref{eq:3}.

Collecting \eqref{eq:4.1}, \eqref{eq:4.2}, \eqref{eq:4.3} and \eqref{eq:8},  we thus get that the logarithm of the right--hand side of \eqref{eq:3} is bounded above by
$$
\left(\frac{1+\delta}{m'}+\frac{|{\mathcal C}|}{\phi(m')}\right) n\log n+O(n(\log n)^{1/2} \log\log n),
$$
where as before $\delta = \gcd(m',2)-1$.
By \eqref{eq:3} and \eqref{eq:3.5pt5}, we deduce
$$
\left(1-\left(\frac{1+\delta}{m'}+\frac{|{\mathcal C}|}{\phi(m')}\right)\right)n\log n \ll n(\log n)^{1/2} \log\log n
$$
so that
\begin{equation}
\label{eq:6}
1-\left(\frac{1+\delta}{m'}+\frac{|{\mathcal C}|}{\phi(m')}\right) \ll \frac{\log\log n}{(\log n)^{1/2}}.
\end{equation}
For a fixed $m'$, if the left-hand side of \eqref{eq:6} is positive, then \eqref{eq:6} cannot hold for $n$ large.  A direct computation shows that the left-hand side of \eqref{eq:6} is positive for $m' = m-1 = 3$ and
$5 \le m' = m-1 \le 1000$.  Thus, we may suppose now that $m' = m-1 > 1000$ (though a smaller lower bound will suffice for the arguments below).

We show now that the left--hand side of \eqref{eq:6} is positive for $m' > 1000$ and smaller than the right-hand side of \eqref{eq:6} for $n$ large (independent of $m'$) to obtain a contradiction.  
To see why, recall $|{\mathcal C}| \le \phi(m')-1$ since $m'-1 \not\in {\mathcal C}$.  
Writing $m' = 2^{s} m_{0}$ where $m_{0}$ is an odd integer, 
one can check directly that $a \not\in {\mathcal C}$ where
$$
a = 
\begin{cases}
2 &\text{if } s = 0, \\
m_{0}+2 &\text{if $s \ge 1$ and $m_{0} \equiv 1 {\hskip -6pt}\pmod{4}$ or if $s = 1$ and $m_{0} \equiv 3 {\hskip -6pt}\pmod{4}$},  \\
3m_{0} + 2 &\text{if $s \ge 2$ and $m_{0} \equiv 3 {\hskip -6pt}\pmod{4}$}.
\end{cases}
$$
Furthermore, $1 \le a < m'-1$ and $\gcd(a,m') = 1$.
We deduce then that 
$$
\frac{|{\mathcal C}|}{\phi(m')}\le \frac{\phi(m')-2}{\phi(m')} \le \frac{m'-3}{m'-1},
$$
where we have used that $\phi(m') \le m'-1$.
Since $\delta \in \{ 0,1 \}$, we obtain
$$
1-\left(\frac{1+\delta}{m'}+\frac{|{\mathcal C}|}{\phi(m')}\right)
\ge 1-\left(\frac{2}{m'}+\frac{m'-3}{m'-1}\right)
= \frac{2}{m'(m'-1)},
$$
and, with $n$ large, the inequality \eqref{eq:6} does not hold if $m' \le (\log n)^{1/5}$. 

Suppose next that $m' > (\log n)^{1/5}$. By sieve methods, with $n$ large, the interval $(m'/2, m']$ contains $\pi_1\gg m'/(\log m'\log\log m')$ primes $q$ such that $(q-1)/2$ is divisible only by primes $\ge (\log m')^3$. 
Such $q$ are relatively prime to $m'$ but not in ${\mathcal C}$ unless there is some prime $\ge (\log m')^3$ dividing both $m'$ and $q-1$.  For a fixed prime $r \ge (\log m')^3$ dividing $m'$, the number of primes $q$ counted in $\pi_{1}$ with $(q-1)/2$ divisible by $r$ is $\le m'/r \le m'/(\log m')^3$. Since $r \ge (\log m')^3$ and $r \mid m'$, there are $\le \log m'/\log\log m'$ possibilities for $r$.  Hence, the number of primes $q$ counted by $\pi_{1}$ which are in ${\mathcal C}$ is 
$$
\le \dfrac{m'}{(\log m')^3} \cdot \dfrac{\log m'}{\log\log m'} = \dfrac{m'}{(\log m')^2 \log\log m'} < \dfrac{\pi_1}{2}
$$
for $n$, and hence $m'$, large.  
We deduce now that $|{\mathcal C}| \le \phi(m')-\pi_1/2$.  Thus, 
$$
\frac{2}{m'}+\frac{|{\mathcal C}|}{\phi(m')}\le \frac{2}{m'}+\frac{m'-\pi_1/2}{m'}=\frac{m'-(\pi_1/2-2)}{m'}.
$$
Recalling the upper bound $m' = m-1 \ll \log n$, we obtain
$$
1-\left(\frac{2}{m'}+\frac{|{\mathcal C}|}{\phi(m')}\right)
\ge \frac{\pi_1/2-2}{m'} \gg \frac{1}{\log m'\log\log m'}
\gg \frac{1}{\log\log n \log\log\log n}.
$$
For large $n$, we deduce that \eqref{eq:6} does not hold, giving the desired contradiction.  \end{proof}

We now turn to the proof of Theorem~\ref{thm:1}.

\begin{lemma}\label{thm1:lemma}
Let $p$ be a prime, and let $a \in \mathbb Z$ with $1 \le |a| \le n$.  Set
$$
P_{p,a}(X) = X^{p-1} + \cdots + X + 1 + a = \frac{X^p-1}{X-1}+a.
$$
If an integer $b \ge 2$ is such that
\begin{equation}
\label{eq:y}
n!=P_{p,a}(b),
\end{equation}
then $a$ satisfies one of the following:
\begin{itemize}
\item[(i)] $|a| \equiv 1 \pmod p$;
\item[(ii)] $a = \pm \,p a_1$, where $a_{1} \in \mathbb Z^{+}$ with $a_1 \equiv 1\pmod p$.
\end{itemize}
\end{lemma}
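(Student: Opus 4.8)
The plan is to exploit the divisibility $|a| \mid n!$, which comes for free from $1 \le |a| \le n$, together with the classical description of the prime factorization of $\Phi_p(b) = (b^p-1)/(b-1)$ for $p$ prime.

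First I would note that since $1 \le |a| \le n$, the integer $|a|$ is a divisor of $n!$ (every positive integer up to $n$ divides $n!$). Feeding this into \eqref{eq:y}, which states $n! = (b^p-1)/(b-1) + a$, gives $|a| \mid (b^p-1)/(b-1) = \Phi_p(b)$. So everything reduces to understanding which integers can divide $\Phi_p(b)$.

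Next I would invoke the standard structure of $\Phi_p(b)$ (cf.~\cite{Dic}): every prime $q$ dividing $\Phi_p(b)$ satisfies either $q = p$ or $q \equiv 1 \pmod p$, and furthermore $p \mid \Phi_p(b)$ precisely when $b \equiv 1 \pmod p$, in which case $\nu_p(\Phi_p(b)) = 1$. The last point is the one worth writing out carefully: if $p \mid b-1$ then the lifting-the-exponent identity gives $\nu_p(b^p-1) = \nu_p(b-1) + 1$, hence $\nu_p(\Phi_p(b)) = \nu_p(b^p-1) - \nu_p(b-1) = 1$ (alternatively one expands $(1+pk)^p$ by the binomial theorem). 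Consequently $\Phi_p(b) = p^{\epsilon} u$, where $\epsilon \in \{0,1\}$ and $u$ is a product of primes each congruent to $1 \pmod p$.

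Finally, from $|a| \mid \Phi_p(b)$ I would write $|a| = p^{\epsilon'} a_1$ with $0 \le \epsilon' \le \epsilon \le 1$ and $a_1 \mid u$; being a (possibly empty) product of primes $\equiv 1 \pmod p$, the positive integer $a_1$ satisfies $a_1 \equiv 1 \pmod p$. If $\epsilon' = 0$, then $|a| = a_1 \equiv 1 \pmod p$, which is case (i); if $\epsilon' = 1$, then $|a| = p a_1$, so $a = \pm p a_1$ with $a_1 \in \mathbb{Z}^+$ and $a_1 \equiv 1 \pmod p$, which is case (ii). There is no real obstacle here beyond correctly recording the factorization structure of $\Phi_p(b)$; the only subtle point is the bound $\nu_p(\Phi_p(b)) \le 1$, handled by lifting the exponent.
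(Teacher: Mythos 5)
Your proposal is correct and follows essentially the same route as the paper: deduce $|a| \mid \Phi_p(b)$ from $|a| \mid n!$ and \eqref{eq:y}, then use that $\Phi_p(b)$ is divisible only by primes $\equiv 1 \pmod p$ and possibly by $p$ to the first power. The only difference is that you spell out the bound $\nu_p(\Phi_p(b)) \le 1$ via lifting the exponent, where the paper simply cites Dickson; this is a harmless (and welcome) addition.
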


\begin{proof}
The condition $1 \le |a| \le n$ implies that $a \mid n!$.  As a consequence of \eqref{eq:y}, we see that
$$
a \mid (b^p-1)/(b-1).
$$
Note that $(X^p-1)/(X-1) = X^{p-1} + \cdots + X + 1$ is the $p$-th cyclotomic polynomial $\Phi_{p}(X)$.  
As in the first proof of Lemma~\ref{lemmatwo}, where we considered the cyclotomic polynomial $\Phi_{m'}(X)$, 
we have $\Phi_{p}(b) = (b^p-1)/(b-1)$ is divisible only by primes congruent to $1$ modulo $p$ or possibly by $p$ itself, but not by $p^2$.  
Since $a \mid \Phi_{p}(b)$, we deduce that $a$ satisfies (i) or (ii), completing the proof.
\end{proof}

\begin{proof}[Proof of Theorem~\ref{thm:1}]
We work on equation \eqref{eq:x} with a prime $p\ge 7$.  First, we assume $a$ satisfies
\begin{equation}
\label{thm1proofeq1}
1 \le a < \dfrac{0.5 \log\log n}{\log \log \log n},
\end{equation}
with the goal of obtaining a contradiction.
As in the statement of Theorem \ref{thm:1}, we take $n$ large.  
Observe that \eqref{eq:x} is equivalent to \eqref{eq:y}; we will work with the latter below. 
By Lemma~\ref{thm1:lemma}, we see that either (i) or (ii) holds.  

Assume that (i) holds, so $a \equiv 1\pmod p$.  We also have
$$
(b^p-1)/(b-1) = b^{p-1} + \cdots + b + 1 \equiv 
\begin{cases}
0 {\hskip -5pt}\pmod p &\text{if } b \equiv 1 {\hskip -5pt}\pmod{p}, \\
1 {\hskip -5pt}\pmod p &\text{otherwise}.
\end{cases}
$$
Therefore, we obtain $P_{p,a}(b) \equiv 1 \text{ or } 2 \pmod{p}$.  
Since $p=m \ll n^{1/2}(\log n)^{2}$ by Lemma \ref{lem:1} and since $n$ is large, we get that $p\mid n!$.  
Thus, the left-hand side of \eqref{eq:y} is $0$ modulo $p$ and the right-hand side is $1$ or $2$ modulo $p$, contradicting $p \ge 7$.

We deduce that (ii) holds.
Since $a$, $p$ and $a_{1}$ are positive, the sign in (ii) is positive; in other words, $a = p a_1$. 
Making the substitution $X=Y+1$, we get
\begin{align*}
P_{p,a}(Y+1) &= \frac{(Y+1)^p-1}{Y}+pa_1 \\
&= Y^{p-1}+\binom{p}{1} Y^{p-2}+\cdots+\binom{p}{p-2}Y+p(1+a_1).
\end{align*}
Since $P_{p,a}(Y+1)$ is Eisenstein with respect to $p$, we obtain that $P_{p,a}(Y+1)$ and consequently $P_{p,a}(X)$ are irreducible.  
To rule out the existence of solutions to \eqref{eq:y} for large $n$, it suffices now to show that the smallest prime $q$ for which $P_{p,a}(X) \equiv 0 \pmod{q}$ has no solutions is $\le n$, as then the left-hand side of \eqref{eq:y} will be divisible by $q$ and the right-hand side will not be, giving us the desired contradiction.
The idea is motivated by the fact (cf.~\cite[Theorems~1, 2]{Serre}) that the set of primes $q$ for which $P_{p,a}(X)\equiv 0\pmod q$ has no solutions is of positive density and in fact of density $\ge 1/\deg(P_{p,a}(X)) = 1/(p-1)$.  For the smallest such $q$, we use a theorem of J.~C.~Lagarias, H.~L.~Montgomery and A.~M.~Odlyzko \cite[Theorem~1.1]{LMO} implying a prime $q$ exists for which $P_{p,a}(X) \equiv 0 \pmod{q}$ has no solutions and
$$
q \le 2 D^A,
$$
where $A$  is an absolute constant, $D=|\Delta|^{p!}$ and $\Delta$ is the discriminant of $P_{p,a}(X)$.
Since 
\begin{equation}\label{xminus1timesP}
(X-1)P_{p,a}(X)=X^{p}+aX-(a+1),
\end{equation}
and $P_{a,b}(1) = a+p$, we get that 
\begin{equation}\label{Deltavalue}
|\Delta|=\frac{p^{p} (a+1)^{p-1}+a^p(p-1)^{p-1}}{(a+p)^2}
\end{equation}
(see \cite[Theorem~2]{Swan}).  Recall $p \ge 7$ and that we are in the case (ii) where $p \mid a$.  Therefore,
$$
|\Delta|\le (a+1)^{2p},
$$
and we deduce
$
D \le (a+1)^{2 \cdot (p+1)!}.
$
Since $p \le a$ and $(p+1)! \le (p+1)^{p} < (p+1)^{p+1}/2$ for $p \ge 7$, we obtain
$$
D \le (a+1)^{(a+1)^{a+1}}.
$$
As we now have
$$
q \le 2 (a+1)^{A (a+1)^{a+1}},
$$
we obtain a contradiction if $n$ is equal to or larger than this upper bound on $q$.  Therefore, 
$$
n <  2 (a+1)^{A (a+1)^{a+1}}.
$$
As $n$ is large, we now must have $a$ is large, and
\begin{align*}
\log\log n &\le \log\big( A (a+1)^{a+1} \log(a+1) \big) + O(1) \\
&= (a+1) \log (a+1) + O(\log\log (a+1)).
\end{align*}
Given $n$ is large, we obtain a contradiction to our assumption \eqref{thm1proofeq1}.  Thus, \eqref{thm1eq} holds, finishing the proof.
\end{proof}

\begin{proof}[Proof of Theorem~\ref{thm:2}]
(i)
Recall that before the statement of Theorem~\ref{thm:1}, we established that \eqref{eq:x} has no solutions if $a = 0$.  Theorem~\ref{thm:2}~(i) follows directly now from Lemma~\ref{thm1:lemma}.
\vskip 5pt \noindent
(ii)
In the case that $a > 0$, the result follows from Theorem~\ref{thm:1}.  We also saw in Lemma~\ref{lemmatwo},  with the second proof where $m_{0} = 6$, that \eqref{eq:x} has no solutions when $n$ is sufficiently large and $a = -1$.  So we restrict now to $a < -1$.  Observe that Theorem~\ref{thm:2} (i) established above implies $p \le |a|$.  Since $p \ge 7$, we deduce $|a| \ge 7$.  

We will want a bound on the discriminant $D$ of the splitting field for $P_{p,a}(X)$ over $\mathbb Q$.  We can make use of \eqref{Deltavalue} as before, where $\Delta$ is the discriminant of $P_{p,a}(X)$, except there is a possibility that the denominator $(a+p)^{2}$ is $0$.  This happens precisely when $a = -p$ and $P_{p,a}(1) = 0$.  We estimate $\Delta$ differently in a way that can be used more generally here.  Recall that $p \ge 7$.  From \eqref{xminus1timesP}, the roots of $P_{p,a}(X)$ have absolute value no greater than the positive real root of
$$
X^{p} - |a| X - |a+1|,
$$
which one can check is no more than $|a|^{2/p}$; indeed, this follows from $|a+1| < |a|$ and
$$
|a| \big( |a| - |a|^{2/7} - 1 \big) > 0 \qquad \text{for } |a| \ge 3.
$$
Thus, the difference of any two distinct roots of $P_{p,a}(X)$ is at most $2 |a|^{2/p}$.  As $P_{p,a}(X)$ is a monic polynomial of degree $p-1$, we deduce that 
$$
|\Delta| \le (2|a|^{2/p})^{(p-1)(p-2)} \le 2^{p^{2}} |a|^{2p}.
$$
Recalling $|a| \ge 7$, we see that $2 \le |a|^{1/2}$ so that
$$
|\Delta| \le |a|^{p^{2}/2 + 2p} \le |a|^{p^{2}}.
$$
Therefore, 
$$
|D| \le |\Delta|^{(p-1)!} \le |a|^{p \cdot p!}.
$$
As $p!$ is the product of the first $p-1$ positive integers $> 1$, we have $p! \le p^{p-1}$.  Since $p \le |a|$, we deduce that 
$$
|D| \le |a|^{p^{p}} \le |a|^{|a|^{|a|}}.
$$

If $\Phi_{p}(x) + a$ is irreducible, then $P_{p,a}(X)$, from Lemma~\ref{thm1:lemma}, is irreducible.  The conclusion of Theorem~\ref{thm:2} (ii) follows from an argument identical to the proof of Theorem~\ref{thm:1} after it was shown there that $P_{p,a}(X)$ is irreducible.  More precisely, with the bound on $|D|$ above, 
a prime $q$ exists for which $P_{p,a}(X) \equiv 0 \pmod{q}$ has no solutions and
$$
q \le 2 |a|^{A\,|a|^{|a|}}.
$$
As in the proof of Theorem~\ref{thm:1}, we deduce then that the smallest prime $q$ such that $P_{p,a}(X) \equiv 0 \pmod{q}$ has no solutions is $\le n$, provided that $n$ is large (not dependent on $p$ and $b$) and \eqref{thm1eq}, with $a$ replaced by $|a|$, does not hold.  As $q$ will necessarily divide the left-hand side of \eqref{eq:x}, or equivalently \eqref{eq:y}, in this case but not divide the right-hand side, we deduce that, with $n$ sufficiently large, we have \eqref{thm1eq} with $a$ replaced by $|a|$.  Thus, also \eqref{thm2eq} holds.

We now address what happens if $\Phi_{p}(X) + a$, equivalently $P_{p,a}(X)$, factors as a linear polynomial times an irreducible polynomial in $\mathbb Z[X]$, where we call the latter $Q(X)$.  
Since $P_{p,a}(X)$ is monic, we also can and do take $Q(X)$ to be monic.  Note that $\deg Q = p-2$.  
Let ${\mathcal S}$ be the set of primes $q$ such that $Q(X)\equiv 0\pmod q$ has a solution.  Jordan's theorem  (cf.~\cite[Theorems~1, 2]{Serre}) tells us that 
$$
d{\mathcal S} := \lim_{T \rightarrow \infty} \dfrac{|\{ q \in \mathcal S: q \le T \}|}{\pi(T)} \le 1-\dfrac{1}{p-2} = \dfrac{p-3}{p-2}.
$$ 
Observe that ${\text{\rm deg}}(Q)/{\text{\rm deg}}(P)=(p-2)/(p-1)>d{\mathcal S}$.
Theorem~4.1 in \cite{BH} implies for fixed $a$ and $p$ that equation \eqref{eq:x} has only finitely many solutions $(n,b,p)$.  We justify next that with a slight modification of the argument for Theorem~4.1 in \cite{BH}, using an effective version of the Chebotarev Density Theorem (cf.~\cite{ThornerZaman}), we can obtain the same result uniformly for $a$ instead satisfying 
\begin{equation}\label{boundforminusa}
|a| < \frac{0.5\log\log\log n}{\log\log\log\log n}.
\end{equation}

Assume now that \eqref{boundforminusa} holds (with $n$ sufficiently large and \eqref{eq:x} holding).  
Since $P_{p,a}(X)$ and $Q(X)$ are monic, the linear factor of $P_{p,a}(X)$ is of the form $X - X_{0}$ where $X_{0} \in \mathbb Z$ and $|X_{0}|$ divides $|a+1|$ where we recall that $a < -1$.  The latter implies $|X_{0}| \le |a|$.  We use $\varepsilon_{j} > 0$ below to denote arbitrarily chosen sufficiently small numbers, independent of $n$, $p$, $a$ and $b$, but note that the condition that $n$ is sufficiently large in Theorem~\ref{thm:2}~(ii) may depend on the choices of $\varepsilon_{j}$.  
Since $n$ is sufficiently large, Lemma~\ref{lem:1} implies that $b$ is also as large as we need.  In particular, the lower bound on $b$ in Lemma~\ref{lem:1} implies that, uniformly for $a$ satisfying \eqref{boundforminusa}, we have
$$
P_{p,a}(b) \le (1+\varepsilon_{1}) b^{p-1} 
\qquad \text{and} \qquad
Q(b) = \dfrac{P_{p,a}(b)}{b - X_{0}} \ge (1-\varepsilon_{1}) b^{p-2}.
$$
Recall $P_{p,a}(b) = n!$.  
We deduce that
\begin{equation}\label{Q(b)ineq1}
Q(b) \ge (1- \varepsilon_{2}) P_{p,a}(b)^{(p-2)/(p-1)} = (1- \varepsilon_{2}) (n!)^{(p-2)/(p-1)},
\end{equation}
where we can take $\varepsilon_{2} = 2\varepsilon_{1}$ since then
$$
1- \varepsilon_{2} = 1- 2\varepsilon_{1}
\le \dfrac{1-\varepsilon_{1}}{1+\varepsilon_{1}} 
\le \dfrac{1-\varepsilon_{1}}{(1+\varepsilon_{1})^{(p-2)/(p-1)}}.
$$
Note that $Q(b) > 0$.
Recalling the definition of $\mathcal S$, we obtain from \eqref{eq:y} that 
$$
Q(b) \ \text{ divides }\ \prod_{q \in \mathcal S} q^{\nu_{q}(n!)},
$$
which we now use to find an upper bound on $Q(b)$.  
Recall that $\Delta$ is the discriminant of $P_{p,a}(X)$.  Let $\Delta'$ denote the discriminant of $Q(X)$.  As $P_{p,a}(X) = Q(X) (X-X_{0})$, we deduce that $|\Delta| = |\Delta'| |Q(X_{0})|^{2} \ge |\Delta'|$ (where we have used $Q(X)$ is irreducible and of degree $p-2 > 1$ so that $Q(X_{0}) \ne 0$).  
Taking $D'$ to be the discriminant of the splitting field for $Q(X)$ over $\mathbb Q$, we see that our arguments bounding $D$ above give us
$$
|D'| \le |\Delta'|^{(p-2)!} \le |\Delta|^{(p-1)!} \le |a|^{|a|^{|a|}}.
$$

From \cite[Theorem~2]{Serre}, since $\deg Q = p-2$, we have 
$$
d \mathcal S \le 1 - \frac{1}{p-2} = \frac{p-3}{p-2}.
$$
As described in the introduction of \cite{ThornerZaman}, an effective form of the Chebotarev Density Theorem implies further that 
\begin{equation}\label{Sbound}
\begin{split}
(1-\varepsilon_{3}) \cdot d \mathcal S \cdot \dfrac{t}{\log t}
 &\le |\{ q \le t: q \in \mathcal S \}| \\
 &\le (1+\varepsilon_{3}) \cdot d \mathcal S \cdot \dfrac{t}{\log t}
 \quad \text{for all $t \ge \exp\bigg(\dfrac{\log n}{\log\log n}\bigg)$}
 \end{split}
\end{equation}
if $n$ is sufficiently large and
\begin{equation}\label{nD'ineq}
\dfrac{\log n}{\log\log n} \gg (\log |D'|)^{2} + p! (\log (p!))^{2} + p!^{p!} \log |D'| + |D'|.
\end{equation}
Recalling $7 \le p \le |a|$ and the bound on $|D'|$, we see that \eqref{nD'ineq} holds provided
$$
\dfrac{\log n}{\log\log n} \gg |a|^{2|a|} \log^{2} |a| + |a|^{|a|} (|a| \log |a|)^{2} + |a|^{|a|^{|a|}} |a|^{|a|} \log |a| + |a|^{|a|^{|a|}}.
$$
Given that $n$ is sufficiently large, this last inequality holds whenever \eqref{boundforminusa} holds.  
Thus, \eqref{boundforminusa} implies \eqref{Sbound}.  

Define
$$
\mathcal S' = \bigg\{ q \in \mathcal S : q > \exp\bigg(\dfrac{\log n}{\log\log n}\bigg) \bigg\}.
$$
For a prime $q$, we have
$$
\nu_{q}(n!) \le \sum_{j=1}^{\infty} \bigg\lfloor \dfrac{n}{q^{j}} \bigg\rfloor
< \sum_{j=1}^{\infty} \dfrac{n}{q^{j}} = \dfrac{n}{q-1}.
$$
Thus,
$$
\prod_{q \in \mathcal S} q^{\nu_{q}(n!)}
= \prod_{\substack{q \in \mathcal S \\ q \le n}} q^{\nu_{q}(n!)}
\le \prod_{\substack{q \in \mathcal S \\ q \le n}} q^{n/(q-1)}.
$$
Set $J = \lceil (\log n)/\log(1+\varepsilon_{4}) \rceil$, where $\varepsilon_{4} = \sqrt{\varepsilon_{3}}$.  
Define the interval 
$$
I_{j} = \big( (1+\varepsilon_{4})^{j-1}, (1+\varepsilon_{4})^{j} \big],
\quad \text{ for } 1 \le j \le J.
$$  
Fix $j_{0} > 1$ such that 
$$
(1+\varepsilon_{4})^{j_{0}-1} < 2\exp\bigg(\dfrac{\log n}{\log\log n}\bigg) \le (1+\varepsilon_{4})^{j_{0}},
$$
and observe that for $j \ge j_{0}$ and $n$ sufficiently large, we have
\begin{equation}\label{ep6def}
\dfrac{(1+\varepsilon_{4})^{j-1}}{(1+\varepsilon_{4})^{j-1} - 1} < 1 + \varepsilon_{5},
\end{equation}
for some $\varepsilon_{5} > 0$.
For some constant $C > 0$, we deduce that
\begin{align*}
\log \prod_{q \in \mathcal S} q^{\nu_{q}(n!)}
&\le n \sum_{q \in \mathcal S} \dfrac{\log q}{q - 1} \\
&\le n \Bigg( \sum_{\substack{q \le 2\exp((\log n)/\log\log n) \\ q \text{ prime}}} \dfrac{\log q}{q - 1} + \sum_{j=j_{0}}^{J} \sum_{\substack{q \in \mathcal S' \\ q \in I_{j}}} \dfrac{\log q}{q - 1}  \Bigg) \\
&\le n \bigg( \dfrac{\log n}{\log\log n} + C + \,\sum_{j=j_{0}}^{J} 
|\{ q \in I_{j} \cap \mathcal S' \}| \cdot  \max\bigg\{ \dfrac{\log q}{q-1} :  q \in I_{j} \bigg\} \bigg).
\end{align*}
By \eqref{Sbound}, for $j \ge j_{0}$ so that $j/(j-1) \le 2$, we obtain that $|\{ q \in I_{j} \cap \mathcal S' \}|$ is bounded above by 
\begin{align*}
(1+\varepsilon_{3}) &\cdot d \mathcal S \cdot \dfrac{(1+\varepsilon_{4})^{j}}{j \log (1+\varepsilon_{4})}
- (1-\varepsilon_{3}) \cdot d \mathcal S \cdot \dfrac{(1+\varepsilon_{4})^{j-1}}{(j-1)\log (1+\varepsilon_{4})} \\[5pt]
&\le \bigg(  \dfrac{1+\varepsilon_{4}}{j}  - \dfrac{1}{j-1}\bigg) \cdot d \mathcal S \cdot \dfrac{(1+\varepsilon_{4})^{j-1}}{\log (1+\varepsilon_{4})} + 
3\varepsilon_{3} \cdot d \mathcal S \cdot \dfrac{(1+\varepsilon_{4})^{j}}{j \log (1+\varepsilon_{4})} \\[5pt]
&\le \dfrac{\varepsilon_{4}}{j} \cdot d \mathcal S \cdot \dfrac{(1+\varepsilon_{4})^{j-1}}{\log (1+\varepsilon_{4})} + 
3\varepsilon_{3} \cdot d \mathcal S \cdot \dfrac{(1+\varepsilon_{4})^{j}}{j \log (1+\varepsilon_{4})}.
\end{align*}
Recall \eqref{ep6def}.  
As $(\log x)/(x-1)$ is a decreasing function for $x > 1$, we deduce, for $j \ge j_{0}$, that
\begin{align*}
\max\bigg\{ \dfrac{\log q}{q-1} :  q \in I_{j} \bigg\} 
&\le \dfrac{(j-1) \log (1+\varepsilon_{4})}{(1+\varepsilon_{4})^{j-1}}
\cdot \dfrac{{(1+\varepsilon_{4})^{j-1}}}{{(1+\varepsilon_{4})^{j-1}}-1} \\[5pt]
&< (1+\varepsilon_{5}) \cdot \dfrac{j \log (1+\varepsilon_{4})}{(1+\varepsilon_{4})^{j-1}}.
\end{align*}
Therefore, for $j \ge j_{0}$, we see that
$$
|\{ q \in I_{j} \cap \mathcal S' \}| \cdot  \max\bigg\{ \dfrac{\log q}{q-1} :  q \in I_{j} \bigg\} 
\le \big(\varepsilon_{4} + 3 \varepsilon_{3} (1+\varepsilon_{4})\big) (1+\varepsilon_{5}) \cdot d \mathcal S.
$$
Recall $\varepsilon_{4} = \sqrt{\varepsilon_{3}}$.  Also, we have $\log(1+\varepsilon_{4}) \ge \varepsilon_{4} - \varepsilon_{4}^{2}/2$.  
From the definition of $J$ and noting $j_{0} \ge 2$, we obtain
\begin{align*}
\sum_{j=j_{0}}^{J} 
|\{ q \in I_{j} \cap \mathcal S' \}| &\cdot  \max\bigg\{ \dfrac{\log q}{q-1} :  q \in I_{j} \bigg\} \\[5pt]
&\le \big(\varepsilon_{4} + 3 \varepsilon_{3} (1+\varepsilon_{4})\big) (1+\varepsilon_{5}) \cdot d \mathcal S \cdot \dfrac{\log n}{\log(1+\varepsilon_{4})} \\[5pt]
&\le \big(\varepsilon_{4} + 3 \varepsilon_{4}^{2} (1+\varepsilon_{4})\big) (1+\varepsilon_{5}) \cdot d \mathcal S \cdot \dfrac{\log n}{\varepsilon_{4} - \varepsilon_{4}^{2}/2} \\[5pt]
&\le (1 + \varepsilon_{6}) \cdot d \mathcal S \cdot \log n.
\end{align*}
Recall $d \mathcal S \le (p-3)/(p-2)$.  
We deduce that
$$
\log \prod_{q \in \mathcal S} q^{\nu_{q}(n!)}
\le (1 + \varepsilon_{7}) \cdot d \mathcal S \cdot n \log n
\le (1 + \varepsilon_{7}) \cdot \dfrac{p-3}{p-2} \cdot n \log n,
$$
which is therefore also an upper bound on $\log Q(b)$.  
On the other hand, we see from \eqref{Q(b)ineq1} that
$$
\log Q(b) \ge \log (1-\varepsilon_{2}) + \dfrac{p-2}{p-1} \cdot \log(n!)
\ge (1-\varepsilon_{8}) \cdot \dfrac{p-2}{p-1} \cdot n \log n.
$$
As
$$
\dfrac{p-3}{p-2} < \dfrac{p-2}{p-1},
$$
we obtain a contradiction for $\varepsilon_{j}$ chosen sufficiently small.  
Therefore, \eqref{boundforminusa} does not hold and \eqref{thm2eq} follows.

\vskip 5pt \noindent
(iii)
Fix $n \in \mathbb Z^{+}$ and $a < 0$ such that $p \mid a$.  
Let $a_{1}$ be the negative integer for which $a = p a_{1}$.
By Theorem~\ref{thm:2} (ii), it suffices to show that $\Phi_{p}(X) + a$ is either irreducible or a linear polynomial times an irreducible polynomial.  It further suffices instead to establish the same result for 
$$
g(X) = \Phi_{p}(X+1) + a = X^{p-1} + \binom{p}{1} X^{p-2} + \cdots + \binom{p}{p-2} X + p (a_{1}+1).
$$
If $p \nmid (a_{1}+1)$, then $g(X)$ is Eisenstein and, hence, irreducible.
If $p \mid (a_{1}+1)$, then we make use of the Newton polygon of $g(X)$ with respect to $p$ (cf.~\cite[Section~2]{Fil}).  This Newton polygon consists of two line segments, one joining $(0,0)$ to $(p-2,1)$ and the other joining $(p-2,1)$ to $(p-1,e)$ where $e$ is an integer $\ge 2$.  By a theorem of G.~Dumas \cite{Dumas}, it follows that $g(X)$ is either an irreducible polynomial or a linear polynomial times an irreducible polynomial, giving the desired result.

\vskip 5pt \noindent
(iv)
As a condition in Theorem~\ref{thm:2}, we have $p \ge 7$.  
For $2 \le |a| \le n$ satisfying \eqref{eq:x}, we see that $|a| \mid \Phi_{p}(b)$ since $|a| \mid n!$.  
Also, by Theorem~\ref{thm:2} (i), we obtain that either $p \mid a$ or $p \mid (|a|-1)$. 
Therefore, we deduce $|a| \ge p \ge 7$.  
Since $p$ is odd, the value of $(b^{p}-1)/(b-1) = b^{p-1} + \cdots + b + 1$ is odd independent of the parity of $b$, so $|a| \mid \Phi_{p}(b)$ implies that $a$ is odd.  
Now, suppose $a$ has a prime factor $q \ne p$.  
Since \eqref{eq:x} implies $q \mid \Phi_{p}(b)$, we deduce $q \equiv 1 \pmod{p}$, as the order of $b$ will be $p$ modulo $q$ (cf.~\cite{Dic}).  

\vskip 5pt \noindent
(v) 
By Theorem~\ref{thm:2} (iv), we need only consider $a$ odd and $|a| > 1$. 
From Theorem~\ref{thm:2} (i), if \eqref{eq:x} holds and $(a,p) \ne (-p,p)$, then $p$ belongs to the set
$$
\mathcal P = \{ p \text{ prime} : p \mid (|a|-1) \text{ or } p \mid a, p \ge 7,  p \ne -a \}.
$$
For each $a$ odd with $1 < |a| \le 10^{5}$, we considered $p$ from the set $\mathcal P$.
We then looked at the primes $q \le \min\{ |a|, 100 \}$ in turn. 
Note that $\Phi_{p}(1) + a = p+a$ and, for $k \ne 1$, we have $\Phi_{p}(k) + a = (k^{p}-1)/(k-1) + a$.   
We checked if $p+a$ or some value of $(k^{p}-1)/(k-1) + a$ for $k \in \{ 0, 2, 3, \ldots, q-1 \}$ is $0$ modulo $q$.  If all of these were non-zero for some $q$, then $q \le |a| \le n$ so that $q$ divides the left-hand side of \eqref{eq:x} but not the right-hand side \eqref{eq:x}, so that \eqref{eq:x} does not hold.  
So when this occurred, we stopped considering $q \le \min\{ |a|, 100 \}$ and proceeded to the next value of $p \in \mathcal P$ or, if there were no more such $p$, to the next value of $a$.  This procedure was sufficient to obtain the list given in Theorem~\ref{thm:2} (v) and took less than a minute and a half to compute using Maple 2023 on a Macbook Pro with an Apple M2 chip.  

\vskip 5pt \noindent
(vi) 
For the case that $a < 0$ and $p \mid a$, in the proof of Theorem~\ref{thm:2} (iii), we showed that the polynomial $g(X) = \Phi_{p}(X+1)+a$ is either irreducible or a linear polynomial times an irreducible polynomial.  When $a = -p$, we have $\Phi_{p}(1)+a = 0$.  We deduce then that $\Phi_{p}(X)-p$ is $X-1$ times an irreducible polynomial.  Computations revealed also that for every choice of $(a,p) \in S_{0}$ not of the form $(-p,p)$, we have $\Phi_{p}(X)+a$ is a linear polynomial times an irreducible polynomial.  By Theorem~\ref{thm:2} (ii), the result follows.
\end{proof}
 
\medskip
\noindent
{\bf Remark.}
It is not hard to make $P_{p,a}(X)$ have a root modulo every integer $q$.  We can just choose $a$ so that $P_{p,a}(X)$ has a linear factor.  More precisely, choosing $a=-(b_0^{p}-1)/(b_{0}-1)$ for some integer $b_0 >  1$,  we get that $P_{p,a}(X)$ has the factor $X-b_0$. Is the co-factor $P_{p,a}(X)/(X-b_0)$ irreducible so that we can apply Theorem~\ref{thm:2} (ii)?  If this were true and provable, then perhaps one could restrict oneself 
to the case when $P_{p,a}(X)$ does not have a linear factor and in this case the conjecture following Theorem~3 in \cite{Sch2} would predict that under this instance there are only finitely many pairs $(a,p)$ with $a$ negative such that $P_{p,a}(X)$ does not have a linear factor and is not irreducible.  We leave such questions as future projects. 

\section{Acknowledgements}

The authors thank Carl Pomerance for useful correspondence. This work was initiated while the second author was a Fellow at the Stellenbosch Institute for Advanced Study in the second part of 2023. He thanks this Institution for hospitality, support and great working conditions.

\end{document}